\newtheorem{thm}{Theorem}[section]
\newtheorem{lemma}[thm]{Lemma}
\theoremstyle{definition}
\newtheorem{defn}[thm]{Definition}
\newtheorem{conj}[thm]{Conjecture}
\newtheorem{question}[thm]{Question}
\numberwithin{equation}{section}
\newcommand{\pp}{{\mathbf {t}}}
\newcommand{\D}{\mathbf d}
\newcommand{\DD}{\mathcal D}
\newcommand{\dd}{\mathcal D}
\newcommand{\Z}{\mathbb Z}
\newcommand{\cf}{{\mathcal  C}}
\newcommand{\ff}{{F}}
\newcommand{\rr}{\mathcal R}
\newcommand{\cff}{{\overline {\mathcal C}}}
\begin{document}

\title{ A geometric spectral sequence in Khovanov homology}

\author{Zolt\'an Szab\'o}

\address{Department of Mathematics\\
Princeton University,\\
 Princeton, NJ, 08544}

\email{szabo@math.princeton.edu}

\begin{abstract}
The aim of this paper is to introduce and study a geometric spectral sequence
on $\Z_2$ Khovanov homology.
\end{abstract}

\maketitle

\section{Introduction}

The construction in the present paper was motivated by a
joint work with Peter Ozsv\'ath on the Heegaard Floer homology of double branched covers. 
In
 \cite{OSdouble} 
 a spectral 
sequence is constructed  from  the reduced Khovanov homology of a link $L$
 to the hat version of 
 Heegaard Floer homology ${\widehat {HF}}(-\Sigma _L)$  with $\Z _2$
coefficients, where
  $\Sigma _L$  denotes the double cover of $S^3$ branched along $L$. 
The same construction also gives
a spectral sequence from the mod 2 Khovanov homology $Kh(L)$ to 
${\widehat {HF}}(-\Sigma _L\# (S^1\times S^2))$.
(Note that the latter is just two copies of ${\widehat {HF}}(-\Sigma _L)$.)
 
Gauge-theoretic spectral sequences starting from Khovanov homology
 were constructed by Bloom \cite{Bloom} for   
Monopole Floer homology \cite{KMbook}, and by Kronheimer and Mrowka
\cite{KMspectral} in the context of instanton Floer homology.

It was proved by Baldwin in \cite{Baldwin} that the Heegaard Floer homology spectral sequence is a link invariant. 
The corresponding result for the Monopole Floer homology sequence was given in \cite{Bloom}.

Given a diagram $\DD$ of the link $L$ with $n$ double points the constructions
in \cite{Bloom}, \cite{KMspectral}, and \cite{OSdouble} 
assign higher differentials for the mod 2 Khovanov complex that correspond to
 $k$-dimensional faces of 
$\{0,1\}^n$, with $k\geq 2$. Note that 
 the differentials count
solutions to the
Seiberg-Witten equations for certain $k-1$ dimensional family of metrics in
\cite{Bloom}, instantons in  
\cite{KMspectral}, and certain pseudo holomorphic $k+2$-gons in 
\cite{OSdouble}. 
In particular the maps for a given hypercube depend on some additional data, 
such as 
choices of metric and perturbations, 
or Heegaard diagrams and complex structures.

The construction in the present paper uses the same idea. However 
the extra data is more combinatorial:
At each double point we will fix an orientation of the  arc that connects the two segments of the $0$-resolution, see
Figure \ref{skein}.  This overall choice $\pp$
  is called a decoration of the diagram. 
Given a decoration,  each $k$ dimensional 
face determines a collection of circles in the two-dimensional plane
together with $k$ oriented arcs that connect the circles. These configurations are discussed in Section $2$.  
In Sections $3$ and $4$ we spell out a geometric rule  that assigns non-trivial contributions to certain special
configurations, see for example Figure \ref{2D} and \ref{abcde}. 
In Section $5$ we prove that $\D(\pp) \cdot \D(\pp)=0$, and so we get a chain complex ${\widehat {C}}(\DD, \pp)=(C_\DD, \D(\pp))$
  for a decorated diagram $(\DD,\pp)$.

Another feature of the constrution is that the higher differentials preserve the 
$\delta$ grading in Khovanov homology, and that induces
a $\delta$ grading on  the homology ${\widehat H}(\DD,\pp)$.
 See also  \cite{Greene}, \cite{Manolescu} and \cite{Tweedy} for
discussions on  extra gradings
in Heegaard Floer homology.
There is also a  filtration  on ${\widehat {C}}(\DD,\pp)$, given by the homological grading, and that gives a spectral sequence from 
$Kh(L)$ to ${\widehat H}(\DD,\pp)$.

 In Section
6 we  prove that the homology theory ${\widehat H}$ and the spectral sequence
are well-defined invariants of $L$:

\begin{thm}
Given an oriented link $L$, let $\DD _1$, $\DD _2$ be two diagrams representing $L$, and let $\pp_1$, $\pp _2$ be decorations for
$\DD _1$ and $\DD  _2$ respectively. Then there is a grading preserving isomorpism between 
${\widehat H}(\DD  _1,\pp _1)$ and
${\widehat H}(\DD  _2,\pp _2)$. Furthermore the spectral sequences
 $Kh(L) \longrightarrow {\widehat H}(\DD _1, \pp _1)$ and
$Kh(L) \longrightarrow {\widehat H}(\DD _2, \pp _2)$ 
  are also isomorphic.
\end{thm}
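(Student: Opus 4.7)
The plan is to decompose the invariance statement into two sub-problems and then reassemble them. The sub-problems are: (i) for a fixed diagram $\DD$, the filtered chain homotopy type of ${\widehat C}(\DD,\pp)$ is independent of the decoration $\pp$; and (ii) for each of the three Reidemeister moves relating diagrams $\DD_1$ and $\DD_2$, one can produce decorations $\pp_1,\pp_2$ together with a chain homotopy equivalence ${\widehat C}(\DD_1,\pp_1)\simeq {\widehat C}(\DD_2,\pp_2)$ that preserves the $\delta$ grading and respects the filtration by homological grading. Since any two diagrams of $L$ are connected by a finite sequence of Reidemeister moves, combining (i) and (ii) will yield the desired isomorphism ${\widehat H}(\DD_1,\pp_1)\cong {\widehat H}(\DD_2,\pp_2)$, and the filtered nature of the equivalences will automatically induce an isomorphism of the associated spectral sequences from the $E_2$ page onward.

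For (i), I would argue one crossing at a time: any two decorations on $\DD$ differ by a sequence of single-arc reversals, so it suffices to reverse one arc. Such a reversal should correspond to a local relabeling of the generators in the Khovanov cube, assembled into a chain isomorphism that is diagonal with respect to both gradings. The verification reduces to a combinatorial inspection of the face contributions in Sections 3--4, checking that reversing a single arc changes each relevant configuration by a controlled symmetry that the isomorphism absorbs.

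For (ii), I would follow the standard Khovanov strategy of expressing the new cube of resolutions as a mapping cone of the old cube along the direction of the new crossing(s), and then cancelling acyclic subcomplexes via Gaussian elimination. The cases $R_1$ and $R_2$ are comparatively soft, since the local contributions at the new crossings can be made especially simple by a judicious choice of decoration, so the cancellations should mirror the classical ones. The main obstacle is $R_3$ combined with the higher differentials. In contrast to the classical Khovanov proof, where only the one-dimensional edge maps at the new crossings matter, here one must also track how faces of $\{0,1\}^n$ that include the new crossings contribute to $\D(\pp)$ and how these contributions transform under Gaussian elimination. The key tools should be the identity $\D(\pp)\cdot \D(\pp)=0$ from Section 5 (which guarantees that each partial cancellation yields a valid subcomplex) together with a local-to-global analysis of the configurations of Section 2, showing that every configuration on $\DD_1$ meeting the move region extends uniquely to a matching configuration on $\DD_2$ with equal combinatorial weight.
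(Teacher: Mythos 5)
Your overall decomposition into (i) decoration independence and (ii) Reidemeister invariance is the one the paper uses, and your picture of (ii) — Gaussian elimination with judiciously chosen decorations, with the main difficulty being to track how the higher-dimensional faces through the move region interact with the cancellations, especially for $R_3$ — matches the paper's Figures \ref{Pos}--\ref{Simplified} in spirit. But there is a genuine gap in your plan for (i).

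You propose to realize the reversal of a single arc as ``a local relabeling of the generators \ldots a chain isomorphism that is diagonal with respect to both gradings,'' absorbing a ``controlled symmetry'' of the configurations. No such diagonal isomorphism exists, and the reason is structural: reversing one arc $\gamma_m$ does not permute the set of contributing configurations. For example, a Type $A_k$ face with $k\ge 3$ contributes $F_{\cf_0}(1)=1$ for one orientation of $\gamma_m$ and contributes $0$ for the other; no relabeling of generators of $V(I)$ and $V(J)$ can make a nonzero map equal to the zero map. What actually happens is captured by Theorem \ref{Relation}: after introducing the ``edge-homotopy'' $H_m$ (a map associated to $1$-dimensional faces at the $m$-th crossing, independent of decoration, with $H_{\cf_0}(1)=1$ on split edges and $H_{\cf_0}(x_1x_2)=y_1$ on join edges), one proves
$$\D(\pp')=\D(\pp)+H_m\cdot \D(\pp)+\D(\pp)\cdot H_m.$$
This identity is a substantial combinatorial statement whose verification (the bulk of Section 5.2) goes face by face through all the $A$, $B$, $C$, $D$, $E$ types and their compositions with split and join edges. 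Granting it, the correct chain isomorphism is $G=\mathrm{Id}+H_m$, which is filtered (since $H_m$ raises the homological grading by $1$) but manifestly not diagonal in $h$; it is only diagonal in the $\delta$ grading because $H_m$ raises $h$ and $gr$ each by $1$. That $G$ is a chain map uses $H_m\cdot H_m=0$ and $H_m\cdot\D(\pp)\cdot H_m=0$ in addition to the displayed relation, and $G$ is an isomorphism because it is identity plus strictly filtration-raising. Your proposal as written would stall exactly at the point where a ``controlled symmetry'' would have to absorb the appearance and disappearance of entire face contributions; the paper's way out is the homotopy relation above, not a conjugation by a graded automorphism.

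Two smaller remarks. Your claim that a filtered equivalence induces an isomorphism of spectral sequences ``from the $E_2$ page onward'' undersells what one gets: a filtered chain isomorphism such as $G$ induces an isomorphism already at $E_0$, and the chain homotopy equivalences arising from Gaussian elimination in (ii) match the spectral sequences from $E_1$ on. Also, for $R_3$ the paper does not merely check that configurations meeting the move region extend with ``equal combinatorial weight''; it chooses the decorations in Figure \ref{Re3b} precisely so that certain higher faces (those forcing Type $B$ or $D$ on a circle not dividing $a$, or Type $E$ on a circle not dividing $a$) vanish, and then observes that both sides simplify to literally the same complex (Figure \ref{Simplified}). You would need a comparably explicit choice of decorations to make the $R_3$ argument go through.
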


 While the definitions  of ${\widehat H}(L)={\widehat H}(\DD,\pp)$
 and ${\widehat {HF}}(-\Sigma _L\#(S^1\times S^2))$ use different tools, 
the two homology theories are rather similar. In fact it is natural to conjecture 
that the two homology theories are isomorphic as mod 2 graded vector spaces over ${\Z}_2$. 
The similarities are underscored
by a few additional constructions, presented in Section 7. These include a reduced version of ${\widehat H}(L)$
that could be  the  natural counterpart of ${\widehat {HF}}(-\Sigma (L))$, 
and an adaptation of the transverse invariant of \cite{Plam} to ${\widehat H}(L)$, see also 
\cite{Plam2}, \cite{Baldwin}. Computations for the spectral sequence are given in a 
recent work of  Seed, see \cite{Seed}.

While in this paper we work over mod 2 coefficients,  an integer lift of the spectral sequence for 
 odd Khovanov homology is given in
  \cite{SpectralOdd}, see also \cite{Beier}.
In a different direction
it  would be  interesting to compare ${\widehat H}(L)$ with the recent 
work of  Lipshitz, Ozsv\'ath and Thurston, see \cite{LOT}

{\bf Acknowledgments.} I would like to thank John Baldwin, Peter Ozsv\'ath, Cotton Seed and Andr\'as Stipsicz for many helpful 
conversations during the course of this work. This work was 
partially supported by NSF Grant DMS-1006006.

\section{Preliminary constructions}

We will start by recalling  some  constructions 
from Khovanov homology, see \cite{Kh}, \cite{Bn}. Let $L$ be an oriented link, and $\dd$ be a diagram
of $L$ in the plane with $n$ double points. At each crossing we have two 
resolutions $0$ and $1$, see Figure \ref{skein}.
Sometimes it will be helpful to use the
one-point compactification of the plane and view $\dd$ and the resolutions
in the 2-dimensional sphere.   

By ordering the double-points we get an identification  between the 
set of resolutions  ${\mathcal R}$ and $\{0,1\} ^n$. Each resolution 
$I\in {\mathcal R}$ gives a set of disjoint circles $x_1,...,x_t$ in the 
sphere. The resolution $I$ comes equipped with a $2^t$ dimensional vector space $V(I)$ over $\Z_2$. 
It will be useful 
for us to identify the basis of 
$V(I)$ with monomials in $x_i$. This is done by associating a two dimensional vector space $V(x_i)$ 
for each circle with generators $1$ and $x_i$, and 
defining $V(I)$ as the tensor product of $V(x_i)$ for $i=1,...,t$. 
Note that  the original 
construction of Khovanov \cite{Bn}, \cite{Kh} uses different notations,
where $v_-$ plays the role of $x_i$, and $v_+$ the role of $1$.
Finally we define
$$C_{\dd}= \bigoplus _{I\in \rr}V(I)$$ 
 
A $k$ dimensional face of $C_{\dd}$ corresponds $(I,J)\in \rr\times \rr$ with $I<J$ so that
$I$ and $J$ differs at exactly $k$ coordinates.
In Khovanov homology the boundary map is defined by associating maps 
$$ D_{I,J}: V(I)\longrightarrow V(J)$$
to all the 1-dimensional faces (edges) of $\rr$. Our goal is to define 
some higher differentials on ${C}_{\dd}$ by extending the definition 
of $D_{I,J}$ to all $k$ dimensional faces. To this end
we will need to fix some extra data (decoration) at each crossing. 
First note that at each crossing there is an arc connecting the two segments
 of the $0$ resolution. Making surgery along 
this arc produces the $1$ resolution. 
The extra data is an assignment of orientation to all of these arcs. 
The oriented arc at the $i$-th crossing is denoted by $\gamma _i$. 
At each crossing there are two choices. Let $\pp$ denote an overall decoration
 for 
$\dd$.

\begin{figure}
\mbox{\vbox{\epsfbox{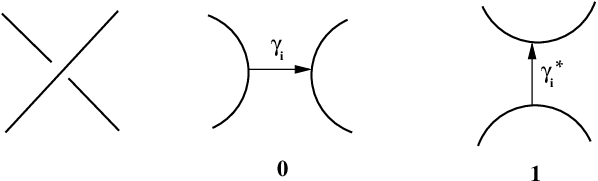}}}
\caption{Resolutions and oriented arcs}
\label{skein}
\end{figure}

A decorated $k$-dimensional face ${\mathcal F}=(I,J,\pp)$ of $(\dd,\pp)$
 determines 
a configuration in the sphere, that consist of the
 circles of $I$ together
with  $k$ oriented arcs $\gamma_{j_1},...,\gamma _{j_k}$, where $j_1,...,j_k$
 are the coordinates where $I$ and $J$ differ. 

Sometimes it will be useful to have a more abstract notion of this 
configuration, that makes no reference to the  knot 
projection $\dd$.

\begin{defn}
A $k$ dimensional {\it configuration} is a set of disjoint circles 
$x_1,...,x_t$ in $S^2$
 together with $k$ embedded oriented arcs $\gamma_1$,...,$\gamma _k$, with the properties that
\begin{itemize}
\item The arcs are disjoint from each other.
\item The endpoints of the arcs lie on the circles.
\item The inside of the arcs are disjoint from the circles.
\end{itemize}
\end{defn}

\begin{defn}
Given a decorated $k$-dimensional face $(I,J,\pp)$ of $\dd$, the corresponding $k$ 
dimensional configuration is denoted by $\cf(I,J,\pp)$.
\end{defn}

\begin{defn}
Given a configuration $\cf=(x_1,...,x_t,\gamma _1,...,\gamma _k)$,  the
\begin{itemize}
\item {\it undecorated configuration} $\cff$ is given by forgetting 
the orientation on the $\gamma$ arcs.
\item   {\it dual configuration} $\cf ^\ast=(y_1,...,y_s,\gamma _1^\ast,...,\gamma _k ^\ast)$ 
is given by the rule that transforms the
$0$-resolution of  Figure  \ref{skein} into the $1$-resolution. 
In particular the dual circles $y_i$ are constructed from the $x$ circles by making surgeries along the $\gamma$ arcs,
 and the dual arcs $\gamma _i ^\ast$ 
are given by rotating $\gamma _i$ by $90$ degrees counter-clockwise.
\item  {\it reverse} $r(\cf)$ is 
defined by reversing 
the orientation for all the $\gamma _i$ arcs. Note that $(\cf ^\ast)^\ast$ is equivalent to $r(\cf)$.

\item {\it mirror} $m(\cf)$ is defined by reversing 
the orientation of the 
two-dimensional sphere. 
\end{itemize}
\end{defn}

Let's call $x_i$ the {\it starting circles} of $\cf$, and $y_i$ {\it the ending circles} of $\cf$. Let 
$$V_0(\cf)=\bigotimes _{i=1} ^t V(x_i),\ \ \ V_1(\cf)=\bigotimes _{j=1}^sV(y_j)$$

Our goal is to define a map
$$\ff_{\cf}:\  V_0(\cf)\longrightarrow V_1(\cf)$$
for each configuration.

\begin{defn}
Given a configuration $\cf=(x_1,...,x_t,\gamma _1,...,\gamma _k)$, those $x_i$  circles that are disjont from all the 
$\gamma$ arcs are called {\it passive} circles. Clearly the same circles are also passive for the dual configuration
$\cf ^\ast$. 
A configuration is called (purely) active, if it has no passive circles. 
By deleting all the passive circles we get the {\it active part} $\cf_0$ of $\cf$. 
The starting circles of $\cf _0$ are called the {\it active starting circles} of $\cf$, and similarly the ending circles
of $\cf _0$ are the {\it active ending circles} of $\cf$. We also have a decomposition
$$V_0(\cf)=V_0(\cf _0)\otimes P(\cf), \ \ V_1(\cf)=V_1(\cf _0)\otimes P(\cf),$$
where $P(\cf)$ is the tensor product of $V(w_i)$ of all the passive $w_i$ circles of $\cf$.
\end{defn}
The map $\ff_{\cf}$ satisfies various properties. We will list these below. 

\begin{defn}{\it Extension Formula.}\label{extension}
For a  configuration $\cf$
the map $\ff _{\cf}$ depends only on the active part $\cf _0$ and the number of passive circles by the following
formula:
$$\ff_{\cf}(a\cdot v)=\ff_{\cf_0}(a)\cdot v,$$
where  $v\in P(\cf)$ and $a\in V_0(\cf)$.
\end{defn}

Recall from \cite{Bn}, \cite{Kh}
 that the  Khovanov differential satisfies the same extension property. 
For one-dimensional faces (edges) there are two kinds of active 
configurations: splitting and joining. 

\begin{itemize}
\item A splitting edge $\cf$, 
 has one active starting circle $x_1$,  
and two active ending circles  $y_1$ and $y_2$. The map 
$\ff_{\cf_0}$ is given by 
$$\ff _{\cf _0}(1)=y_1+y_2,\ \ \ \ff_{\cf _0}(x_1)=y_1y_2.$$
\item A joining edge has two active starting circles, $x_1$ and $x_2$, one active ending circle $y_1$ and we have
$$\ff _{\cf _0}(1)=1,\ \  \ff_{\cf _0}(x_1)=y_1, \ \ 
\ff_{\cf _0}(x_2)=y_1, \ \ \ff _{\cf _0}(x_1x_2)=0.$$
\end{itemize}

\begin{defn}
A configuration $\cf$ is called {\rm disconnected}, 
if the active starting circles of $\cf _0$ can be partitioned into two non-empty sets, 
$c_1,...,c_s$, $d_1,...,d_t$ 
so that none of the  $\gamma$ arcs connect $c_i$ to $d_j$. Otherwise we call the configuration
connected. Note that every $1$-dimensional configuration is connected. 
\end{defn}

\begin{defn}{\it Disconnected Rule.}\label{disconnected}
If  $\cf$ is a disconnected configuration 
 then 
$$\ff_{\cf} \equiv 0.$$
\end{defn}

\begin{defn}{\it Conjugation Rule.}\label{conjugation}
For each configuration $\cf$ we have
$$\ff_{\cf}= \ff_{r(\cf)}$$
\end{defn}

\begin{defn}{\it Naturality Rule.}\label{naturality}
Let $\cf$ and $\cf'$ be two  $k$ dimensional configurations, with the property that there is an
orientation preserving diffeomorphism of the sphere that maps $\cf$ to $\cf'$ . Then the diffeomorphism 
induces natural identifications
 $V_0(\cf)=V_0(\cf')$, 
and $V_1(\cf)=V_1(\cf ')$. Under these identifications we have
$$\ff_{\cf}= \ff_{\cf'}$$
\end{defn}

Now we discuss the duality rule. First note that the monomials in $V_0(\cf)$
 and $V_1(\cf)$ give natural basis for these vector spaces. In particular
if $a$ is a monomial in $V_0(\cf)$ then we can write
$$\ff_{\cf}(a)=\sum _b \alpha(a,b)\cdot b$$
where $\alpha(a,b)\in  \Z _2$ and the sum is over all the monomials
$b\in  V_1(\cf)$. We will call $\alpha(a,b)$ the coefficient of
 $\ff _{\cf}(a)$ at $b$. Given a circle $z$ we define the 
duality map on the $2$ dimensional
vector space $V(z)$ by $1^\ast =z$, $z^\ast=1$. 
This induces the duality maps on  $V_0(\cf)$ and $V_1(\cf)$.

\begin{defn}{\it Duality Rule.}\label{duality}
Let $\cf$ be a  configuration, and $m(\cf^\ast)$ 
the mirror of the dual configuration. 
Then for all pairs of monomials $(a,b)$, $a\in V_0(\cf)$, $b \in V_1(\cf)$ 
the coefficient of $\ff_{\cf}(a)$ at $b$ is equal to the
coefficient of $\ff_{m(\cf ^\ast)}(b^\ast)$ at $a^\ast$.
\end{defn}

\begin{defn}{\it Filtration rule.} \label{filtration}
Let $\cf$ be a configuration, $a\in V_0(\cf)$, $b\in V_1(\cf)$ monomials. 
For a point  $P$  in the union of the 
starting circles, let $x(P)$ and $y(P)$ denote the starting and ending 
circles that go through $P$. If $a$ is divisible by $x(P)$ and the
 coefficient of $F_{\cf}(a)$ at $b$ is non-zero, then $b$ is divisible by 
$y(P)$.
\end{defn}

The next property involves the grading shift of $F$. For $a\in V(x)$ 
define 
$$gr(1)={1},\ \ \ \ gr(x)=-{1}$$
For monomials  $a\in V_0(\cf)=\otimes _{i=1}^t V(x_i)$, $b\in V_1(\cf)=
\otimes _{j=1}^s V(y_j)$ define the grading to be the sum of the gradings
in each factor. 

\begin{defn}{\it Grading rule.}\label{grading}
Let $\cf$ be a $k$-dimensional configuration, $a\in V_0(\cf)$,
$b\in V_1(\cf)$ monomials. If the coefficient of $F_{\cf}(a)$ at $b$ is
 non-trivial, then 
$$gr(b)-gr(a)=k-2.$$
\end{defn}

Note that for $1$-dimensional configurations $F_{\cf}$ satisfies the rules in 
Definition \ref{disconnected}- \ref{grading}

\section{$2$-dimensional configurations}

In this section we will define the $F$ map for all  $2$-dimensional configurations.
 According to Definition \ref{extension}
it is enough to spell out the rule for the active 
part of $2$-dimensional configurations.  Furthermore 
according to Definition \ref{disconnected}, \ref{conjugation} and \ref{naturality} 
 it is enough to consider connected  configurations 
modulo orientation preserving diffeomorphisms in the sphere, and reversals
$\cf \rightarrow r(\cf)$. 
 The resulting equivalence classes are listed in Figure 2.
The rules for the active map 
$$F=F_{\cf _0}:\ V_0(\cf_0)\longrightarrow V_1(\cf _0)$$ are given in
Definition \ref{2Drule}, where we use monomials in the active starting circles 
$x_i$ as a basis of $V_0(\cf_0)$ and  list only the non-zero terms of $F$.

\begin{defn}\label{2Drule}
\begin{itemize}
\item For a Type 1 configuration 
$$\ff(1)=1.$$

\item For a Type 2 or Type 3 configuration there are three starting circles and one ending circle.  
If $x_1$ denotes the starting circle  that meets both $\gamma$ arcs, then
$$\ff(x_2x_3)=y_1.$$

\item For a Type 4 or Type 5 configuration,
$$\ff(1)=y_1,$$ where among the three ending circles,  $y_1$ denotes the  
unique circle that meets both of the dual $\gamma ^\ast$ arcs.  

\item For a Type 6 or Type 7 configuration there are two starting and two ending circles.
Let $x_1$ denote the starting circle that meets
 both of the $\gamma$ arcs, and $y_1$ denotes the ending  circle that meets both
 of the dual arcs. Then
$$\ff(x_2)=y_1$$

\item For a Type 8 configuration
$$\ff(1)=1,\ \ \ff(x_1)=y_1.$$

\item For a Type 9, configuration 
$$\ff(x_1x_2)=y_1y_2.$$

\item Finally for a Type 10, 11, 12, 13, 14, 15, 16 or for a disconnected
 configuration 
$$\ff\equiv 0.$$
\end{itemize}
\end{defn}

\begin{figure}
\mbox{\vbox{\epsfbox{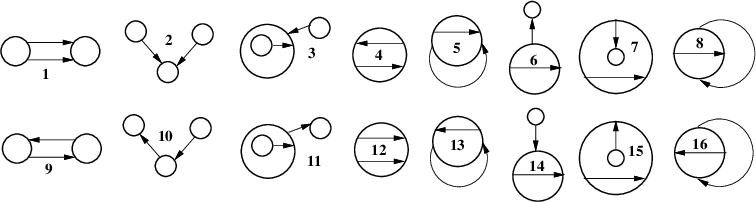}}}
\caption{The classification of active, 
connected $2$-dimensional configurations in the sphere,  modulo the additional
relation  that $\cf$ is equivalent to  $r(\cf)$.}
\label{2D}
\end{figure}

\begin{lemma}\label{2Dcheck}
For $2$-dimensional $\cf$ configurations the map 
$\ff_{\cf}$ satisfies the rules in 
Definition \ref{disconnected}-\ref{grading}.
\end{lemma}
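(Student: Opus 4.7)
The proof is a case-by-case check on the sixteen equivalence classes of active, connected configurations listed in Figure \ref{2D}, together with the disconnected case. The Naturality (\ref{naturality}) and Disconnected (\ref{disconnected}) rules hold by construction, since Definition \ref{2Drule} is stated on equivalence classes under orientation-preserving diffeomorphisms of the sphere and imposes $\ff \equiv 0$ on disconnected configurations. The Conjugation rule (\ref{conjugation}) holds because every defining formula refers only to circles distinguished by meeting both $\gamma$ arcs or both dual $\gamma^\ast$ arcs, and this property is independent of the arc orientations, so $\ff_\cf = \ff_{r(\cf)}$ automatically.

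For the Grading rule (\ref{grading}) one verifies $gr(b)-gr(a)=0=k-2$ on each nonzero term listed in Definition \ref{2Drule}: for instance $gr(x_2 x_3)=-1=gr(y_1)$ in Types 2, 3 (with $V_0=V(x_1)\otimes V(x_2)\otimes V(x_3)$ and $V_1=V(y_1)$); $gr(1)=1=gr(y_1)$ in Types 4, 5 (reading $y_1$ in the three-factor $V_1$); $gr(x_2)=0=gr(y_1)$ in Types 6, 7; $gr(x_1 x_2)=-2=gr(y_1 y_2)$ in Type 9; and the other nonzero cases are immediate. The Filtration rule (\ref{filtration}) is similarly a local geometric check: for each nonzero term $\ff(a)=b$ and each point $P$ of a starting circle dividing $a$, one verifies that the ending circle $y(P)$ through $P$ divides $b$. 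In Types 2, 3 this holds because the unique ending circle $y_1$ is obtained by merging all three starting circles along the arcs, so $y(P)=y_1$ trivially divides $b=y_1$; the remaining nonzero types are handled analogously from the local structure of the surgery.

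The Duality rule (\ref{duality}) is the main obstacle, since it requires organizing the sixteen equivalence classes into dual pairs under $\cf \mapsto m(\cf^\ast)$. Because this operation exchanges starting and ending circles through the surgery along the arcs, it pairs the three-to-one Types 2, 3 with the one-to-three Types 4, 5; the one-to-one types, the two-to-two types, and the collection of types with $\ff \equiv 0$ are each closed under duality. For every dual pair or self-dual type one then matches each nonzero coefficient of $\ff_\cf$ with the corresponding coefficient of $\ff_{m(\cf^\ast)}$, using the monomial duality $1^\ast=x$, $x^\ast=1$. Once the pairing of types is pinned down, each verification reduces to a short monomial comparison, and the lemma follows.
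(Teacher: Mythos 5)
Your proof takes essentially the same approach as the paper's: the disconnected, conjugation, naturality, filtration and grading rules are observed to follow immediately from the form of the definitions, and the real content is the duality rule, handled by first reducing (via the extension formula) to active configurations and then organizing Figure~\ref{2D} into pairs under $\cf\mapsto m(\cf^\ast)$ and comparing coefficients. The only real difference is that the paper pins down the pairing explicitly (namely $1^\ast=9$, $2^\ast=4$, $3^\ast=5$, $6^\ast=14$, $7^\ast=15$, $8^\ast=16$, $10^\ast=12$, $11^\ast=13$, together with the mirror identifications $m(6)=14$, $m(7)=15$, $m(8)=16$, and $m(i)=i$ otherwise), whereas you only argue at the level of circle-count invariants (three-to-one with one-to-three, etc.) that such a pairing must exist and leave the precise assignment to ``pin down.'' The paper also only spells out one sample coefficient check (Type 6). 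Your observation that the conjugation rule follows because the formulas reference only orientation-independent distinguishing features is a clean way to state what the paper just calls immediate, and your explicit grading computations are a harmless elaboration. Overall the two arguments are the same at the level of ideas and both defer the exhaustive monomial checks to the reader.
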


\begin{proof} 
The rules in Definition \ref{disconnected}, \ref{conjugation}, 
\ref{naturality}, \ref{filtration}, \ref{grading} follow immediately from
the definition of $F_{\cf}$. 

According to Definition \ref{extension}, it is enough to 
check the duality rule for the active 2-dimensional configurations. 
If the configuration $\cf$ is disconnected,
then the dual configuration is also disconnected, 
so $\ff_\cf=0$, $\ff_{m(\cf ^\ast)}=0$. 

After dividing with the relation that $\cf$ is equivalent to its reverse, 
the connected types of Figure \ref{2D} are related 
 in the following way, $1^\ast=9$,
$2^\ast=4$, $3^\ast =5$, $6^\ast =14$, $7^\ast= 15$, $8^\ast =16$, 
$10^\ast =12$, $11^\ast= 13$.
Furthermore $m(i)=i$ for $1\leq i\leq 5$, 
or $9\leq i\leq 13$,  and $m(6)=14$,$m(7)=15$, $m(8)=16$. 
It follows that $m(1^\ast)=9$, $m(2^\ast)=4$, 
$m(3^\ast)=5$, $m(6^\ast)=6$, $m(7^\ast)=7$, $m(8^\ast)=8$, 
and $m(i^\ast)$ is of type $j$ with $10\leq j\leq 16$ 
if and only if $10\leq i\leq 16$. Checking the duality formula is now rather 
straightforward. For example if $\cf$ is of  Type $6$ then $F(x_2)=y_1$,
$x_2^\ast =x_1$, 
$y_1^\ast=y_2$, and indeed for the $m(\cf ^\ast)$ configuration (which is also
of Type $6$) the circle
$y_2$ is mapped to $x_1$. The other $2$-dimensional configurations
 are left for the interested reader to check.
\end{proof}

\section{Contributions of $k$-dimensional configurations.} \label{higher}

Given $k>2$ we will distinguish 5 kinds of $k$ dimensional configurations for which $\ff_{\cf}\neq 0$.

\begin{defn}
The configuration  $\cf=(x_1,...,x_s,\gamma _1,...,\gamma _k)$ 
is of Type $A_k$ 
if  for each pair $(i,j)$
with $1\leq i,j\leq k$ the two dimensional configuration
$(x_1,...,x_s,\gamma _i,\gamma _j)$ is of  type $1$, see Figure \ref{abcde}. 
It follows
 that $\cf$ has two active starting circles and $k$ active
ending circles.
For a Type $A_k$ configuration we define 
$$\ff_{\cf _0}(1)=1.$$
\end{defn}

\begin{defn}
A $k$-dimensional configuration $\cf$ is of Type $B_k$ 
if $m(\cf ^\ast)$ is of Type $A_k$. It follows that $\cf$ has $k$ active 
starting circles and two active ending circles. For a Type $B_k$ configuration
define
$$\ff_{\cf _0}(\prod _{i=1} ^k x_i) =y_1 y_2 $$
\end{defn}

\begin{figure}
\mbox{\vbox{\epsfbox{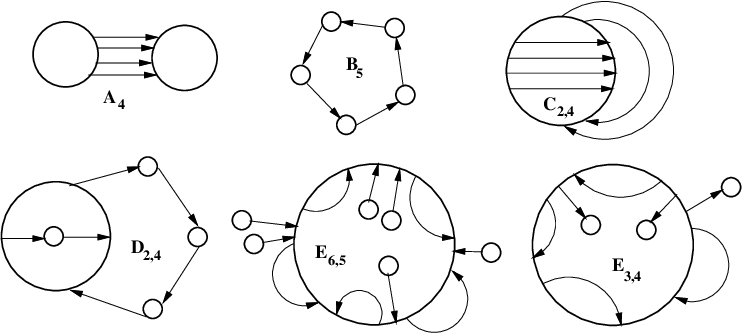}}}
\caption{ Examples for $A$, $B$, $C$, $D$, and $E$ types of configurations}
\label{abcde}
\end{figure}

\begin{defn}\label{Cpq}
Let $\cf$ be a $k$-dimensional configuration,
 with the property that it has only 
one active starting circle $x_1$. 
This circle separates the sphere  into two regions, and the set of arcs 
decomposes as
$$\{\gamma _1,...,\gamma _k \}= \{e_1,...,e_p\} \cup \{ f_1,...,f_q\}$$
where $e_i$ lie in one side of $x_1$ and $f_j$ lie on the other side.
Then $\cf$ is of Type $C_{p,q}$ if
$p\geq 1$, $q\geq 1$ and for each $(i,j)$ pair with
$1\leq i\leq p$, $1\leq j\leq q$ 
the two dimensional configuration $(x_1,e_i,f_j)$ is 
of Type $8$. For a Type $C_{p,q}$ configuration we define
$$\ff_{\cf _0}(1)=1.$$
\end{defn}

Note  that $p+q=k$ and $\cf$ has $k-1$
 active ending circles. Furthermore if $\cf$ is Type $C_{p,q}$ then 
$r(\cf)$ is also Type $C_{p,q}$.

\begin{defn}\label{Dpq}
A $k$-dimensional configuration $\cf$ is of Type $D_{p,q}$ 
if  and only if $m(\cf ^\ast)$  is of type
$C_{p,q}$. In this case we define 
$$\ff_{\cf _0}(\prod _{i=1} ^{k-1} x_i)= y_1.$$
\end{defn}

\begin{defn}
A $k$-dimensional configuration $\cf=(x_1,...,x_s,\gamma _1,...,\gamma _k)$
 with $p+1$ active starting circles, and $q+1$ active ending circles, 
is called of type 
$E_{p,q}$, if for each pair $(i,j)$ with $1\leq i,j\leq k$
the $2$-dimensional configuration $(x_1,...,x_s, \gamma _i,\gamma_j)$
 is of 
type, $2$, $3$, $4$, $5$, $6$, or $7$. See Figure \ref{abcde} for examples
of $E_{6,5}$ and $E_{3,4}$. Note that in a type $E_{p,q}$ 
configuration
there is a unique starting circle $x_1$, called the 
{\it central starting circle},
with the property that it meets all the $\gamma $ arcs. The other active
$x_i$ are called  degree $1$ starting circles.
Similarly there is a unique  ending circle $y_1$ with the 
property that $y_1$ meets all the $\gamma ^\ast$ arcs. This circle is called 
central ending circle, and the other active $y_i$ are called the dual degree
$1$ circles.
Using this labeling, we define
$$\ff_{\cf _0}(\prod _{i=2}^{p+1} x_i)=y_1,$$
when $p\geq 1$ and 
$$\ff_{\cf_0}(1)=y_1,$$
when $p=0$.
\end{defn}

Note that if a $k$ dimensional configuration $\cf$ is Type $E_{p,q}$, then
$k=p+q$, $r(\cf)$ is Type $E_{p,q}$ and $m(\cf ^\ast)$ is of Type $E_{q,p}$.

\begin{lemma}\label{rules}
For any  $k$-dimensional $\cf$ configurations with $k\geq 1$ the map 
$\ff_{\cf}$ satisfies the rules in 
Definition \ref{disconnected}-\ref{grading}.
\end{lemma}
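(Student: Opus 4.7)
The plan is to verify each of the seven rules of Definitions \ref{extension}--\ref{grading} for each of the five nontrivial types $A_k$, $B_k$, $C_{p,q}$, $D_{p,q}$, $E_{p,q}$ introduced in Section \ref{higher}, and to check that the remaining $k$-dimensional configurations (for which $\ff_{\cf}\equiv 0$ by definition) also satisfy every rule. The structure parallels the $k=2$ case established in Lemma \ref{2Dcheck}, so no induction on $k$ is needed: the verification is a finite case analysis within each family.

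The rules that hold essentially by inspection are Extension, Naturality, Conjugation, and Disconnected. Extension is automatic from the way $\ff_{\cf}$ is defined by tensoring $\ff_{\cf_0}$ with the identity on $P(\cf)$. Naturality holds because every type is characterized by a condition on pairs of 2D sub-configurations, which is invariant under orientation-preserving diffeomorphisms of $S^2$, and the prescribed $\ff_{\cf_0}$ depends only on the intrinsic labeling of active starting and ending circles. Conjugation follows because each of the constituent 2D sub-types $1$--$8$ is invariant under $r$ by Lemma \ref{2Dcheck}, so $\cf$ and $r(\cf)$ have the same type, and the defining formulas make no reference to the arc orientations. The Disconnected rule is immediate because each of the five types is manifestly connected: types $C_{p,q}$, $D_{p,q}$, and $E_{p,q}$ possess a central active circle touched by every $\gamma$ arc, while $A_k$ and $B_k$ have each $\gamma$ arc joining the two active starting (respectively ending) circles.

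The Grading rule reduces to a direct count of tensor factors in each type. For example, in Type $E_{p,q}$ with $p\geq 1$, the monomial $a=\prod_{i=2}^{p+1}x_i$ has grading $1-p$ inside the tensor product $\bigotimes_{i=1}^{p+1}V(x_i)$, while $b=y_1$ has grading $q-1$ inside $\bigotimes_{j=1}^{q+1}V(y_j)$, yielding a shift of $p+q-2=k-2$. The analogous computation works verbatim for the other four types. For the Filtration rule, the only nonvacuous cases are $B_k$, $D_{p,q}$, and $E_{p,q}$; in each I would use the combinatorial structure of the type to show that every active starting circle contributing an $x_i$ factor to $a$ is, under surgery along the $\gamma$ arcs, mapped to an active ending circle that contributes a factor to $b$. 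For $E_{p,q}$ this requires identifying that points on each degree-$1$ starting circle lie on the central ending circle $y_1$ after surgery.

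The deepest check is the Duality rule, which requires pairing up types: $A_k\leftrightarrow B_k$, $C_{p,q}\leftrightarrow D_{p,q}$, and $E_{p,q}\leftrightarrow E_{q,p}$ under $\cf\mapsto m(\cf^{\ast})$; the first two pairings follow by definition, and the third follows from the observation noted after Definition of Type $E_{p,q}$. In each pairing only a single coefficient of $\ff$ is nonzero on either side, and one verifies that the prescribed monomials $(a,b)$ for $\cf$ are precisely $(a,b)=((b^{\ast})^{\ast},(a^{\ast})^{\ast})$ dual to the prescribed monomials for $m(\cf^{\ast})$. For configurations with $\ff_{\cf}\equiv 0$, the matching claim is that the collection of five nontrivial types is closed under $\cf\mapsto m(\cf^{\ast})$, so that $\ff_{m(\cf^{\ast})}\equiv 0$ as well. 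The main obstacle is the Type $E_{p,q}$ analysis: one must carefully track how the central starting circle, the degree-$1$ starting circles, and the corresponding dual structures transform under the mirror and duality operations, which amounts to an explicit topological analysis of how the $\gamma$ arcs and their duals subdivide the sphere around the central circles. Once this combinatorial self-duality (with $p$ and $q$ swapped) is established, the coefficient-matching is automatic.
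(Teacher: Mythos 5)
Your proposal is correct and takes essentially the same approach as the paper, which simply states that the lemma ``follows immediately from the above definitions and remarks and Lemma \ref{2Dcheck}.'' You have usefully unpacked the one-line proof into an explicit rule-by-rule, type-by-type verification, and your grading computation and duality pairings $A_k\leftrightarrow B_k$, $C_{p,q}\leftrightarrow D_{p,q}$, $E_{p,q}\leftrightarrow E_{q,p}$ are exactly the observations the paper relies on.
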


\begin{proof}
This follows immediately 
from the above definitions and remarks and Lemma \ref{2Dcheck}
\end{proof}

It is also helpful to revisit now the contribution of the $2$-dimensional
configurations. In later calculations we will 
 think of  them as special cases of the $A,B,C,D,E$ types. 
Note that  
for a two-dimensional configuration, Type $2$ and $3$ of Figure \ref{2D}
are examples for
$E_{2,0}$, Type $4$ and $5$ are $E_{0,2}$,
and Type $6$ and $7$ are $E_{1,1}$. Similarly Type $1$ is $A_2$ and  Type $9$ is 
$B_2$.

There is also a special case, when $\cf$ is of Type $8$, since then  $\cf$ 
is in a sense both  $C_{1,1}$ and $D_{1,1}$. 
In fact, recall that $\ff_{\cf _0}$ has two nontrivial terms: 
$$\ff_{\cf _0}(1)=1 \ \ ,\ \ \ \ \ff_{\cf _0}(x_1)=y_1 $$
and the first term corresponds to the $C_{p,q}$ rule, see Definition \ref{Cpq},
 and the second to the $D_{p,q}$ rule of Definition \ref{Dpq}.

\section{The definition of $\D$. }
In this section we define $\D$ and study the role of decorations.

Given a diagram $\dd$ with decoration $\pp$ and a $k$-dimensional face 
$(I,J)$ the corresponding configuration is denoted by $\cf(I,J,\pp)$.
Clearly  $V(I)$ is naturally identified with $V_0(\cf(I,J,\pp))$ and 
$V(J)=V_1(\cf(I,J,\pp))$.

Using these identifications, we  define 
$$D_{I,J,\pp}:\ V(I)\longrightarrow V(J)$$
by the formula 
$$D_{I,J,\pp}=F_{\cf(I,J,\pp)}.$$

\begin{defn} Let $n$ denote the number of double-points in $\dd$. 
For $1\leq k\leq n$ we
define  
$$\D_k(\pp):\ C_{\dd}\longrightarrow C_{\dd}$$
as the sum of $D_{I,J,\pp}$ for all $k$-dimensional
faces $(I,J,\pp)$. Now
  the boundary map
$$\D(\pp): C_{\dd}\longrightarrow C_{\dd}$$
is defined by
$$\D(\pp) =\sum _{k=1}^n \D_k(\pp).$$
\end{defn}

Note that $\D_1(\pp)$ agrees with the Khovanov differential, and in particular 
$\D_1(\pp)$ doesn't depend on $\pp$.

\subsection{The $H_m$ maps.}

We define an ``edge-homotopy'' for one dimensional configurations.
Similarly to the $F$ maps, $H$ is defined on the active part $\cf _0$ and 
then extended to
$$ H_{\cf}:\ V_0(\cf) \longrightarrow V_1(\cf)$$
by the extension formula of Definition \ref{extension}.

\begin{defn}
If $\cf$ is a one-dimensional configuration, then
$$H_{\cf _0}:\ V_0(\cf _0)\longrightarrow V_1(\cf _0)$$ is defined: 
\begin{itemize}
\item For a splitting edge
$$H_{\cf _0}(1)=1.$$
\item For a joining edge
$$H_{\cf _0}(x_1x_2)=y_1.$$
\end{itemize}
Furthermore
$$H_{\cf}(a\cdot v)=H_{\cf _0}(a)\cdot v,$$
for $a\in V_0(\cf _0)$, $v\in P(\cf)$.
\end{defn}

Note that $H$ doesn't depend on the orientation of the $\gamma$ arcs.

\begin{defn}
For a $1$-dimensional face $(I,J)$ we define 
$$H_{I,J}:\ V(I)\longrightarrow V(J)$$ by the formula
$$H_{I,J}=H_{\cf(I,J)}.$$ 
For the $m$-th double point in the diagram $\dd$ we define 
$$H_m:\ C_{\dd}\longrightarrow C_{\dd}$$
by summing $H_{I,J}$ over those $1$-dimensional faces, where $I$ and $J$ differ only in the $m$-th coordinate.

\end{defn}

\subsection{Dependence on the perturbations.}

\begin{thm}\label{Relation}
Suppose that $\pp$ and $\pp '$ are decorations of the diagram $\dd$ that differ only at 
the $m$-th crossing. 
Then $\D (\pp)$ and $\D(\pp')$ 
are related by the following formula:
$$\D(\pp')=\D(\pp)+ H_m\cdot \D(\pp)+\D(\pp)\cdot H_m .$$
\end{thm}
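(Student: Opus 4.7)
The plan is to verify the identity matrix-element by matrix-element along the $k$-dimensional faces of the hypercube $\{0,1\}^n$. Since $\pp$ and $\pp'$ differ only at the $m$-th crossing, the configurations $\cf(I,J,\pp)$ and $\cf(I,J,\pp')$ coincide whenever the face $(I,J)$ does not involve the $m$-th coordinate, so the corresponding $F$-maps cancel on both sides. Hence it suffices to check the identity on the matrix element $V(I)\to V(J)$ for each $k$-dimensional face with $I_m=0$ and $J_m=1$.

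For such a face, the operator $H_m$ is supported on $m$-edges, so $H_m\cdot \D(\pp)$ contributes exactly one term to $V(I)\to V(J)$, namely $H_{J-e_m,J}\circ F_{\cf(I,J-e_m,\pp)}$, coming from the $(k-1)$-dimensional sub-face $(I,J-e_m)$ that does not involve $m$; similarly $\D(\pp)\cdot H_m$ contributes $F_{\cf(I+e_m,J,\pp)}\circ H_{I,I+e_m}$. Writing $\cf = \cf(I,J,\pp)$, $\cf' = \cf(I,J,\pp')$ and $\alpha = \gamma_m$, the theorem reduces to the local identity
\[
  F_{\cf'} + F_{\cf} \;=\; H_\alpha\circ F_{\cf^{-}} + F_{\cf^{+}}\circ H_\alpha,
\]
where $\cf^-$ is obtained from $\cf$ by deleting $\alpha$ and $\cf^+$ by performing surgery along $\alpha$ and then deleting it. The $k=1$ case is trivial, since the $F$-map on a single edge is the Khovanov differential, which is independent of the decoration, and the right-hand side is empty.

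For $k\geq 2$ the identity is verified by a case analysis using the classification of configurations into the types $A_k$, $B_k$, $C_{p,q}$, $D_{p,q}$, $E_{p,q}$, or zero from Section 4. Since the defining conditions of these types reduce to pairwise conditions on the arcs, whether reversing $\alpha$ changes the type of $\cf$ can be read off from the $2$-dimensional classification of Figure \ref{2D}: the type may change because some sub-configuration $(x_1,\ldots,x_s,\gamma_i,\alpha)$ switches between orientation-sensitive options (for example between Types 2 and 3, or from Type 1 to a zero type). When the type of $\cf$ is preserved, $F_{\cf}=F_{\cf'}$ and one verifies from the explicit formulas for $F$ in Section 4 and the definition of $H$ that the two $H$-terms on the right also cancel; when the type changes, both sides are computed directly and shown to agree. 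The main obstacle is the bookkeeping in this case analysis---enumerating precisely the orientation-sensitive type changes and matching the defect $F_{\cf}+F_{\cf'}$ to the chain-homotopy contribution on the right; the analysis is local near $\alpha$ but must distinguish $\alpha$'s possible roles (for instance as a degree-one arc, as one of the arcs entering the central circle of an $E_{p,q}$ type, or as an arc on one side of the separating circle in a $C_{p,q}$ or $D_{p,q}$ type).
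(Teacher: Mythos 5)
Your reduction is correct and matches the paper's own setup: you correctly localize the theorem to the matrix element $V(I)\to V(J)$ for each $k$-dimensional face with $I_m=0$, $J_m=1$, and the local identity you write is precisely Equation~(1) of the paper's proof, with $\cf^-=\cf(I,J')$ and $\cf^+=\cf(I',J)$. The $k=1$ case is handled correctly (the one-dimensional $F$ is orientation-independent and the homotopy terms do not occur), and for disconnected $\cf$ the extension formula makes the two homotopy terms equal.

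However, the proposal stops short of a proof. The case analysis for $k\ge 2$ is where essentially all the content lies --- it occupies most of the paper's argument --- and you explicitly defer it (``the main obstacle is the bookkeeping in this case analysis''). Beyond being incomplete, the dichotomy you suggest (``type of $\cf$ preserved'' vs.\ ``type changes'') is not the right organizing principle for $k\ge 3$: every one of the nonzero types $A_k$, $B_k$, $C_{p,q}$, $D_{p,q}$, $E_{p,q}$ is defined by a \emph{pairwise} condition on the arcs that must hold for every pair involving $\gamma_m$, so reversing the single arc $\gamma_m$ always destroys a nonzero type when $k\ge 3$. The paper's actual split is therefore ``exactly one of $D_{I,J,\pp}$, $D_{I,J,\pp'}$ is nonzero'' (and then one shows exactly one of the two homotopy terms reproduces it, the other vanishes, by running through the subcases $A_k$, $C_{p,q}$ with $p,q\ge 2$, $C_{p,1}$, $E_{p,q}$ and identifying the type of $\cf(I,J')$ or $\cf(I',J)$) versus ``both are zero'' (and then one shows the two homotopy terms agree, by classifying the type of $\cf(I,J')$ when $D_{I,J'}\cdot H_{J',J}\ne 0$ and checking $\cf(I',J)$ has matching type). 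You also omit the duality trick: since both $D$ and $H$ satisfy the duality rule, Equation~(1) for $(\cf,\delta)$ is equivalent to Equation~(1) for $(m(\cf^\ast),\delta^\ast)$, which the paper uses to dispense with the $B_k$ and $D_{p,q}$ cases and with one of the two ``both trivial'' sub-branches. Without actually carrying out this enumeration, the proposed argument does not establish the theorem.
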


\begin{figure}
\mbox{\vbox{\epsfbox{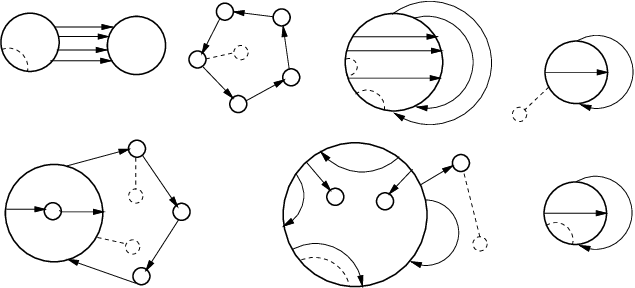}}}
\caption{  $\delta$ arcs and $w$ circles.}
\label{DH}
\end{figure}

\begin{proof}
Let $\delta$ denote the 
(unoriented) arc that correspond to the $m$-th crossing.
For a $k$-dimensional face $(I,J)$ with $I(m)=0$, $J(m)=1$,
let's define $I'$ and $J'$ 
as  $I'(m)=1$, $J'(m)=0$, $I'(i)=I(i)$, $J'(i)=J(i)$ for $i\neq m$. 
Then Theorem \ref{Relation} is equivalent to the following equation for
all these 
$k$ dimensional $(I,J)$ faces:

$$ (1)\ \ D_{I,J,\pp}-D_{I,J,\pp'}=H_{I,I'}\cdot D_{I',J}+D_{I,J'}\cdot H_{J,J'}.$$

Of course, both sides depend only on the equivalence class
of the configuration $\cf=\cf(I,J,\pp)$ 
and the position of the $\delta$ arc among the $\gamma$ arcs in
$\cf$. Note also that $\pp$ and $\pp'$ agree on the $k-1$ 
dimensional faces $(I,J')$ and $(I',J)$
so we can safely omit 
that from the notation.

The rest of this section is devoted to the proof of Equation $(1)$.
Clearly it  is enough to check the  equation  for active configurations.
Another observation is that both the $D$ and the $H$ maps satisfy the duality rule, 
 in particular, 
if Equation  $(1)$ holds for the pair
$(\cf,\delta)$ then it holds for $(m(\cf ^\ast),\delta ^\ast)$.

{\bf $\cf$  is a disconnected configuration.} 
By the disconnected rule we have $D_{I,J,\pp}=D_{I,J,\pp '}=0$. Furthermore by extension formula
for $\D$ and $H$ we have
$$H_{I,I'}\cdot D_{I',J}=D_{I,J'}\cdot H_{J',J}.$$

{\bf $\cf$ is $2$-dimensional.}
In this case  checking Equation $(1)$ is an easy exercise. We  illustrate this 
when  $\cf$ is of Type $1$. Let $x_1$, $x_2$ denote the circles of $I$, $y_1$, $y_2$ denote the circles of $J$, and
$w_1$ the circle of $I'$. Since $(I,I')$ is a join cobordism, 
$$H_{I,I'}(x_1x_2)=w_1, \ \ D_{I',J}(w_1)=y_1y_2.$$
For the other decomposition of $(I,J)$ we have a non-trivial composition by
$$D_{I,J'}(1)=1\ , \ H_{J',J}(1)=1.$$
It follows that the right hand side of Equation (1) maps 
$1$ to $1$, and $x_1x_2$ to $y_1y_2$. 
Since $\cf(I,J,\pp)$ is of Type $1$  and $\cf(I,J,\pp ')$ is of Type $9$ the nontrivial terms of $D$ are
$$D_{I,J,\pp}(1)=1,\ \ D_{I,J,\pp'}(x_1x_2)=y_1y_2,$$
 and Equation $(1)$ holds. 
We leave 
the rest of the $k=2$ cases to the reader. 

For the connected  $k\geq 3$ cases, we start with:

{\bf $D_{I,J,\pp}$ or $D_{I,J,\pp '}$ is non-trivial.}
We can then assume $D_{I,J,\pp}\neq 0$, so $\cf (I,J,\pp)$ is of  Type $A_k$,$B_k$, $C_{p,q}$, $D_{p,q}$ or $E_{p,q}$ with $p+q=k$.
Since $k\geq 3$ it follows that changing the orientation of one arc gives a configuration with
trivial contribution. In particular  $D_{I,J,\pp'}=0$. We 
claim that in each of these case
either
$$ (i)\ \ H_{I,I'}\cdot D_{I',J}=0, \ D_{I,J'}\cdot H_{J',J}=D_{I,J,\pp}$$
or
$$ (ii)\ \ H_{I,I'}\cdot D_{I',J}=D_{I,J,\pp}, \ \ D_{I,J'}\cdot H_{J',J}=0$$
holds. 

If $\cf(I,J,\pp)$ is of Type: 
\begin{itemize}
\item $A_k$, then  $\cf(I,J')$ is $A_{k-1}$ and
$(i)$ holds.

\item $C_{p,q}$ with $p\geq 2$, $q\geq 2$, then $\cf(I,J')$ is
$C_{p,q-1}$ (or $C_{p-1,q}$) and  $(i)$ holds.

\item $C_{p,1}$, with $p\geq 3$, and $\delta$ 
is one of $e_1,...,e_p$ then $\cf(I,J')$ is  $C_{p-1,1}$ and
$(i)$ holds.

\item $C_{p,1}$ with $p\geq 2$ and  $\delta$ equals to  $f_1$, then  
 $(I',J)$ is of Type $A_{p}$
and $(ii)$ holds.

\item $C_{2,1}$ and $\delta $ is one of $e_1,e_2$, then $\cf(I,J')$ is of Type $8$
and $(i)$ holds.

\item Type $E_{p,q}$, then  $\delta$ either connects the central starting circle
$x_1$ 
to $x_t$ for $2\leq t\leq p+1$, or connects $x_1$ to itself. 
In the first case $\cf(I,J')$ and $\cf(I',J)$ are  $E_{p-1,q}$ and
$(i)$ holds.
In the second case $\cf(I,J')$ and $\cf(I',J)$ are  $E_{p,q-1}$ and
$(ii)$ holds.
\end{itemize}

When $\cf$ is of Type $B_k$ or Type $D_{p,q}$, then Equation $(1)$ follows from 
the earlier duality argument.

It remains to check the case when:

{\bf Both $D_{I,J,\pp}$ and $D_{I,J,\pp'}$ are trivial.}
Using the duality property of $D$ and $H$, it is enough to consider the case  
$$D_{I,J'}\cdot H_{J',J}\neq 0.$$ Now we have to show that
$$ (iii)\ \ H_{I,I'}\cdot D_{I',J}= D_{I,J'}\cdot H_{J',J}$$
holds. We list all the cases and configurations, using the Type of $\cf(I,J')$ to determine all the possibilities: 
If $\cf(I,J')$ is of Type

\begin{itemize}
\item $A_{k-1}$, $k\geq 3$ then $D_{I,J'}(1)=1$ and
$(J',J)$ needs to be a splitting edge. Let $x_1$ and $x_2$ denote the active circles 
in $\cf(I,J)$. If the  $\delta$ connects $x_1$ and $x_2$ then either $(I,J,p)$ or $(I,J,p')$ is of type $A_k$
and that contradicts our assumption. It follows that $\delta$ connects $x_1$ to itself or $x_2$ to itself, see Figure
\ref{DH}.
In these cases
$\cf(I',J)$ and $\cf(I,J')$ are both type $A_{k-1}$ and $(iii)$ holds.

\item $B_{k-1}$, $k\geq 3$ then $D_{I,J'}\cdot H_{J',J}\neq 0$ implies that 
$(J',J)$ is a joining edge. If $\delta$ connects $y_1$ and $y_2$, then either
$\cf(I,J,\pp)$ or $\cf(I,J,\pp')$ is of type $C_{k-1,1}$ (a case covered earlier). It follows that $\delta$ connects another circle
$w$ to $y_1$ or $y_2$, see Figure \ref{DH}, and $\cf(I',J)$ is of Type $B_{k-1}$.

\item $C_{p,q}$ with $p+q \geq 3$, then $D_{I,J'}\cdot H_{J',J}\neq 0$ implies that
$(J',J)$ is a splitting edge, see Figure \ref{DH} for the possible choices for $\delta$. In all cases $\cf(I',J)$ is of Type
$C_{p,q}$. 

\item $D_{p,q}$ with $p+q \geq 3$ then $(J',J)$ has to be a joining edge. Since $\cf(I,J')$ has only one active ending circle $y_1$, it follows
that $\cf(I,J)$ has an additional active starting circle $w$. See Figure \ref{DH} for the possible choices for $\delta$ and $w$.

\item $E_{p,q}$  and $(J',J)$ is splitting edge, then $\delta$ has to split one of $y_2,...,y_{q+1}$. It follows that
$\cf(I',J)$ is also of Type $E_{p,q}$.

\item $E_{p,q}$ and $(J',J)$ is a join edge, then $\delta$ has to connect a new circle $w$ to $y_1$. If $\delta$ attaches $w$ to portion of $y_1$ that lies in the central starting circle $x_1$, then either
$\cf(I,J,\pp)$ or $\cf(I,J,\pp')$ is of Type $E_{p+1,q}$. It follows that $\delta$ attaches $w$ to the portion of $y_1$ that
lies in $x_i$ for some $2\leq i\leq p+1$, see Figure \ref{DH}. 
It follows that $\cf(I',J)$ is of Type 
$E_{p,q}$.

\item $8$ and $(J',J)$ is a join edge, then $\delta$ connects a new circle
$w$ to $y_1$, see Figure \ref{DH}, and $\cf(I',J)$ is also of type 8. 

\item $8$ and $(J',J)$ is a split edge then in $\cf(I,J,\pp)$ there is a dual circle that is only attached to 
$\delta ^\ast$ and none of the other $\gamma$ arcs, see Figue \ref{DH},
 (otherwise either $\cf(I,J,\pp)$ or $\cf (I,J,\pp')$ would be of type 
$C(2,1)$). It follows that $C(I',J)$ is also of type $8$.   

\end{itemize}

Checking $(iii)$ is straightforward in all the cases.
\end{proof}

\section{Proof of $\D(\pp)\cdot \D(\pp)=0$.}

In this section (decorated) 
configurations are denoted as $\cf$ or $(\cff,\pp)$ and undecorated 
configurations are denoted by $\cff$.

\begin{thm}\label{dd}
For every $k$-dimensional configuration $(\cff, \pp)$ we have 
$$   \sum _{i=1} ^{k-1} \D_i (\pp)\cdot \D_{k-i}(\pp)=0$$
\end{thm}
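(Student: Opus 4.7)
The plan is to verify the identity locally on each $k$-dimensional face $(I,J)$: show that
$$\sum_{I<K<J} F_{\cf(I,K,\pp)}\circ F_{\cf(K,J,\pp)}=0.$$
I would proceed by induction on $k$ after two preliminary reductions. By the extension formula (Definition \ref{extension}), passive circles pass unchanged through every intermediate resolution, so it suffices to treat purely active $\cff$. By the disconnected rule (Definition \ref{disconnected}), if $\cff$ splits as $\cff_1\sqcup\cff_2$, then any intermediate pair $(\cf(I,K,\pp),\cf(K,J,\pp))$ with non-zero contribution inherits a compatible decomposition, so the sum factors as a tensor product of the analogous sums for $\cff_1$ and $\cff_2$ and vanishes by induction on the factor dimensions. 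The base case $k=2$ is $\D_1(\pp)\circ\D_1(\pp)=0$, which is the classical Khovanov calculation and is $\pp$-independent since $\D_1(\pp)$ does not depend on $\pp$.

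For the inductive step, fix a connected active $\cff$ of dimension $k\geq 3$. By Lemma \ref{rules}, the only sub-configurations of dimension $<k$ with non-zero $F$-map are of Types $A_*$, $B_*$, $C_{*,*}$, $D_{*,*}$, $E_{*,*}$, or one of the listed two-dimensional types, so the sum reduces to a finite combinatorial count indexed by those intermediate $K$ for which \emph{both} halves belong to this restricted list. The duality rule (Definition \ref{duality}) interchanges $A\leftrightarrow B$, $C_{p,q}\leftrightarrow D_{p,q}$, and $E_{p,q}\leftrightarrow E_{q,p}$, so roughly half of the cases follow from the other half by passing to the mirror-dual $m(\cff^*)$. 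What remains is a case analysis organized by the type of $\cff$ (if it is of a named type) or by the pattern of its two-dimensional sub-faces (otherwise), in which one enumerates the contributing intermediate $K$'s and matches them in pairs whose $F\circ F$ compositions produce identical images, cancelling mod $2$.

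Concretely, for each named type of $\cff$ the matching is carried out by an explicit involution on the set of contributing $K$'s. For Type $A_k$ with arcs $\gamma_1,\dots,\gamma_k$, an intermediate $K$ amounts to a non-empty proper subset $S\subsetneq\{1,\dots,k\}$ of arcs used in the first factor; the contributing $S$ are paired by an involution that swaps a single distinguished arc between $S$ and its complement, and a short computation using the $A$-, $B$-, $C$-, $D$-, $E$-rules shows the two paired factorizations take the unique surviving generator $1\in V_0(\cff)$ to the same element of $V_1(\cff)$. Similar involutions handle Types $C_{p,q}$ and $E_{p,q}$, with $B$- and $D$-cases following by duality. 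For $\cff$ not of any named type but still admitting mixed factorizations (say, an $A$-factor composed with an $E$-factor), the same pairing strategy applies after a slightly more intricate enumeration of the local pictures, again cross-checked by duality.

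The main obstacle is the breadth and bookkeeping of this case analysis, particularly for Type $E_{p,q}$, where the distinguished roles of the central starting and central ending circles make the catalog of intermediate configurations more intricate, and for the ``unnamed'' $k$-dimensional $\cff$ whose only role in the argument is to mediate cancellations between mixed factorizations. The key technical point is to describe the pairing involution uniformly on each type so that the enumeration stays manageable, with the naturality (Definition \ref{naturality}) and conjugation (Definition \ref{conjugation}) rules invoked throughout to identify configurations up to the available symmetries and thereby collapse the effective number of distinct cases one must check by hand.
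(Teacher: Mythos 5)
Your plan omits the key idea that makes the paper's proof tractable: the decoration-independence of the sum. The paper first proves Theorem~\ref{Relation}, which expresses $\D(\pp')$ in terms of $\D(\pp)$ and the edge-homotopies $H_m$ when $\pp$ and $\pp'$ differ at a single crossing. From this it deduces Lemma~\ref{ind}: assuming the inductive hypothesis in lower dimensions, the quantity $\sum_i \D_{k-i}(\pp)(\D_i(\pp)(a))$ is the same for all decorations $\pp$ on a fixed undecorated configuration $\cff$. This converts the problem into Theorem~\ref{ki}: for each triple $(\cff,a,b)$ one only needs to exhibit \emph{one} decoration for which the relevant coefficient vanishes. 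You instead propose a direct pairing-and-cancellation argument for an arbitrary fixed $\pp$, organized by the type of $\cff$ and the types of the intermediate factors. That is a much stronger claim, and the ``explicit involution'' you invoke is not established; the paper's case analysis (Lemmas~\ref{cf1}--\ref{ki4}) depends heavily on \emph{choosing} convenient decorations (see the specially chosen $\pp'$ in Figures~\ref{deg2}, \ref{wr2}, \ref{3Darrows}, \ref{C}, \ref{E}, \ref{N}, \ref{EA}, \ref{R}, \ref{R2}) precisely so that most composites vanish term-by-term or cancel in a visible pair, and there is no reason to expect the same clean pairing to persist for a generic decoration.

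A secondary issue: your treatment of disconnected configurations is not quite the paper's. You appeal to a tensor-product factorization of the sum and induction on the factors, whereas the paper observes that with three or more active components one of $F_{\cf(1)}$, $F_{\cf(2)}$ always vanishes by the disconnected rule, and with exactly two components the two surviving decompositions of the cube contribute equal terms by the extension formula, hence cancel mod $2$. Your version may be repairable, but as stated it is not the argument that closes this case. The fundamental gap, however, is the missing homotopy/decoration-independence step; without it the reduction from ``all decorations'' to ``one good decoration'' is absent, and the direct enumeration you outline is not shown to terminate in cancellations.
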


\begin{proof}
We will use induction on $k$. The $k=2$ case is trivial, since 
$\D_1(\pp)$ is the Khovanov differential. 

\begin{lemma}\label{ind}
Let $\pp$ and $ \pp'$ be two decorations   on the $k$ dimensional undecorated configuration
$\cff$. If Equation $(2)$ holds for all $k-1$ dimensional configurations, then 
$$
\sum _{i=1}^{k-1} \D_{k-i}(\pp)\Big( \D_{i}(\pp)(a)\Big)=
\sum _{i=1}^{k-1} \D_{k-i}(\pp')\Big( \D_{i}(\pp')(a)\Big) .$$
for all $a\in V_0(\cf)$.
\end{lemma}

\begin{proof} It is enough to consider the case when $\pp$ and $\pp'$ differ at a single crossing indexed by $m$. 
According to Theorem \ref{Relation}, we have
$$\D _j(\pp ')= \D_j (\pp)+\D _{j-1}(\pp)\cdot H_m + 
H_m\cdot \D _{j-1}(\pp). $$

This formula together with the trivial observations:
$$H_m\cdot H_m=0,\ \ \ \ \ H_m \cdot \D_j\cdot H_m = 0$$
finishes the argument.
\end{proof}

The strategy to prove Theorem \ref{dd}
 is to consider undecorated configurations
$\cff$ 
and monomials $a\in V_0(\cff)$ and $b \in V_1(\cff)$. We will need the 
following:

\begin{thm}\label{ki}
Let $\cff$ 
be an undecorated $k$ dimensional configuration 
with $k\geq 3$,  let $a$ and $b$  denote  monomials $a\in V_0(\cff)$, $b\in V_1(\cff)$.  
For every triple, $(\cff, a, b)$ 
 there exists a decoration $\pp$ on $\cff$ so that the coefficient of
$$ \sum _{i=1}^{k-1} \D _{k-i}(\pp)\Big(\D _{i}(\pp)(a)\Big)$$
at $b$ equals to $0$. 
\end{thm}

Theorem \ref{dd} follows immediately from Theorem \ref{ki} and Lemma \ref{ind} by induction on
$k$.
\end{proof}

The rest of this section is devoted to the proof of Theorem \ref{ki}. 
Let's start with a few notations and remarks. If for a given $\cff $ 
the statement in Theorem \ref{ki} holds for all $a\in V_0(\cff)$ and $b\in 
V_1(\cff)$, we say that Theorem \ref{ki} holds for $\cff$. Similarly if for
a given $(\cff,a)$  the 
statement holds for all $b\in V_1(\cff)$, we say that Theorem \ref{ki} holds
for $(\cff,a)$.

Note that  $\D _{k-i}(\pp) (\D _{i}(\pp)(a))$ can be written by summing 
$$F_{\cf(2)}(F_{\cf(1)}(a)), $$ over all 
decompositions of $\cf$ as $\cf= \cf (1)\ast \cf(2)$, with $\cf (1)$ 
having
 dimension $i$,  and $\cf (2)$  dimension $k-i$.  
Clearly $\cf(1)$ and $\cf(2)$ 
is determined  by the decomposition of the 
$\gamma$ index set $\{1,2,...,k\}$ as 
the union of sets $U$ and $V$, where $|U|=i$ and $|V|=k-i$, where
$U$ corresponds to $\cf(1)$, $V$ corresponds to $\cf(2)$. 

In proving Therorem \ref{ki}  note that according the extension property it is enough to consider 
active configurations. Next we consider the case when
 $\cff$ is disconnected. If the active part of 
$\cff$ has more than $2$ connected components then either $F_{\cf(1)}$ or 
$F_{\cf(2)}$ is $0$  by the disconnected rule. 
If there are $2$ connected 
components then there are still two decompositions of the $k$ dimensional cube to consider.
However their contributions agree according to the extension formula. 

From now on we will assume that $\cff$ is both active and connected. (Note that $\cf(1)$ or $\cf(2)$ could 
be disconnected, or could have passive circles.)

Now we  discuss a few moves on $\cff$ in order 
to cut down the number of cases to consider:

\begin{lemma}\label{dual2}
The statement in Theorem \ref{ki} holds for the triple
$(\cff,a,b)$ if and only if it holds for $(m(\cff ^\ast), b^\ast, a^\ast)$.
\end{lemma}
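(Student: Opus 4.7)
The plan is to exploit the duality rule (Definition \ref{duality}) at the level of each two-step composition appearing in $\D_{k-i}(\pp)\circ\D_i(\pp)$, and then reindex the sum so as to recognize the corresponding quantity for $m(\cff^*)$.

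First, I would set up a canonical bijection between decorations of $\cff$ and decorations of $m(\cff^*)$. Since the dual arc $\gamma_i^*$ is the $90^\circ$ counter-clockwise rotation of $\gamma_i$, the orientation of $\gamma_i$ prescribed by $\pp$ transports directly to an orientation of $\gamma_i^*$, and mirroring the sphere leaves arc orientations untouched; call the resulting decoration $\pp^\dagger$. Moreover, decompositions $\cff=\cff(1)\ast\cff(2)$ with index partition $\{1,\dots,k\}=U\sqcup V$ are in bijection with decompositions $m(\cff^*)=m(\cf(2)^*)\ast m(\cf(1)^*)$ with the swapped partition $V\sqcup U$, and the decorated sub-configurations match: the face of $(m(\cff^*),\pp^\dagger)$ indexed by $V$ is $m(\cf(2,\pp)^*)$, and the face indexed by $U$ is $m(\cf(1,\pp)^*)$.

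Second, for a single decomposition I would expand the composition through intermediate monomials $c\in V_1(\cf(1,\pp))=V_0(\cf(2,\pp))$: the coefficient of $F_{\cf(2,\pp)}(F_{\cf(1,\pp)}(a))$ at $b$ is $\sum_c\alpha(a,c)\beta(c,b)$, where $\alpha,\beta$ are the coefficients of the two maps. Applying the duality rule to each factor gives $\alpha(a,c)=\mathrm{coeff}\bigl(F_{m(\cf(1,\pp)^*)}(c^*),a^*\bigr)$ and $\beta(c,b)=\mathrm{coeff}\bigl(F_{m(\cf(2,\pp)^*)}(b^*),c^*\bigr)$. Substituting and recognizing the result as a composition in the opposite order yields
$$\mathrm{coeff}\bigl(F_{\cf(2,\pp)}\circ F_{\cf(1,\pp)}(a),\,b\bigr) \;=\; \mathrm{coeff}\bigl(F_{m(\cf(1,\pp)^*)}\circ F_{m(\cf(2,\pp)^*)}(b^*),\,a^*\bigr).$$
Summing over all partitions with $1\le|U|\le k-1$ and applying the identification above (swap $U\leftrightarrow V$), the coefficient of $\sum_{i=1}^{k-1}\D_{k-i}(\pp)\D_i(\pp)(a)$ at $b$ equals the coefficient of $\sum_{i=1}^{k-1}\D_{k-i}(\pp^\dagger)\D_i(\pp^\dagger)(b^*)$ at $a^*$. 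Hence a decoration $\pp$ making the first coefficient vanish corresponds to a decoration $\pp^\dagger$ making the second vanish, and vice versa.

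The only genuine obstacle is the bookkeeping in the first paragraph: checking that passing to a sub-face commutes with the dual-mirror operation and that $\pp^\dagger$ restricts correctly on sub-faces. This is direct from the local definition of $\gamma_i^*$ and the fact that dimension-$i$ faces of $\cff$ are indexed by $i$-element subsets of $\{1,\dots,k\}$, which pass to dimension-$i$ faces of $m(\cff^*)$ via the complementary subset when we use the swapped decomposition; beyond this, the content of the lemma is purely a double application of the duality rule.
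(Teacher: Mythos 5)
Your proposal is correct and takes exactly the route the paper intends: the paper's proof is the single sentence ``This follows immediately from the duality rule,'' and your argument simply unpacks that claim by applying Definition \ref{duality} factor by factor and reindexing the sum over decompositions. The bookkeeping you flag as the ``only genuine obstacle'' is indeed the content the paper suppresses, and you have handled it correctly.
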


\begin{proof}
This follows immediately from the duality rule.
\end{proof}

\begin{lemma}\label{rot1}
Let $\cff$ and $\cff '$ be configurations that differ by a rotation move of 
Figure \ref{rotation}. Then Theorem \ref{ki} 
holds for $(\cff,a,b)$ if and only if it holds
for $(\cff ',a,b)$. 
\end{lemma}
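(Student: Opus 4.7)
The plan is to construct, from any decoration $\pp$ on $\cff$, a corresponding decoration $\pp'$ on $\cff'$ by transporting arc orientations through the rotation move, and then to show that the coefficient of $b$ in $\sum_{i=1}^{k-1}\D_{k-i}(\pp)\bigl(\D_i(\pp)(a)\bigr)$ equals the coefficient of $b$ in the analogous sum for $(\cff',\pp')$. Once this equality is established, the existence of a decoration that annihilates the coefficient at $b$ transfers from $\cff$ to $\cff'$, which is the content of the lemma.

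To prove the equality, I would match contributions partition by partition. Let $\gamma_r$ denote the arc whose placement is altered by the rotation, and write $\{1,\dots,k\}=U\sqcup V$ for a given partition, with $\cf(1),\cf(2)$ the associated subconfigurations. Assume without loss of generality that $r\in V$. Then $\cf(1)$ is literally unchanged by the rotation, so $F_{\cf(1)}(a)$ produces the same intermediate monomial on both sides, and the difference lies entirely in $F_{\cf(2)}$. Whenever the rotation restricts to an orientation-preserving diffeomorphism of the support of $\cf(2)$, the Naturality Rule (Definition \ref{naturality}) matches the contributions directly.

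In the remaining cases the rotation genuinely changes the type of $\cf(2)$. Using the classification of Section \ref{higher} together with the Disconnected Rule (Definition \ref{disconnected}), I would enumerate the possible type transitions and verify that either both contributions vanish, or the nonzero contributions at $b$ coincide. For decompositions where the rotation produces an intermediate monomial that falls outside the image of the relevant $F$-map, both sides vanish by the Filtration Rule (Definition \ref{filtration}). When two distinct decompositions on one side are identified under the rotation with a single decomposition on the other, an involution on partitions $(U,V)$ groups the terms that cancel in pairs.

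The main obstacle will be the case analysis for types $E_{p,q}$ and for hybrid Type $8$ subconfigurations, where the rotation can reposition an endpoint of $\gamma_r$ across a central circle and thereby change which circle plays the central role. Here I would use the explicit formulas $F(\prod_{i=2}^{p+1}x_i)=y_1$ and $F(1)=y_1$, in tandem with the Filtration Rule, to verify that any nonzero coefficient at $b$ that appears on one side also appears on the other with the same value; the $C_{p,q}\leftrightarrow D_{p,q}$ and $E_{p,q}\leftrightarrow E_{q,p}$ symmetries from Section \ref{higher} should organize this bookkeeping, reducing the number of genuinely new cases to a manageable list.
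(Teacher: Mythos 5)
Your overall strategy---transport the decoration across the rotation and compare the double sums partition by partition---is the same one the paper uses, but your execution leaves a real gap and introduces a misconception. The paper's proof, short as it is, rests on one sharp combinatorial claim: with the decoration change indicated in the second row of Figure \ref{rotation}, \emph{every} subconfiguration $\cf(U,V)$ of $(\cff,\pp)$ has the same type (hence the same $F$-map) as the corresponding subconfiguration of $(\cff',\pp')$, for every partition $\{1,\dots,k\}=U\sqcup V$. That is, the identification is term-by-term exact; there is no residual case analysis and no cancellation. You never establish this; instead you hedge (``I would enumerate the possible type transitions and verify\dots'') and thereby leave precisely the hard step unproved.

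Two specific problems. First, you invoke Naturality (Definition \ref{naturality}) ``whenever the rotation restricts to an orientation-preserving diffeomorphism of the support of $\cf(2)$,'' but Naturality requires a diffeomorphism of the whole sphere taking one configuration to the other, and the rotation move is not an ambient isotopy of $\cff$ (otherwise the lemma would be immediate and would not need a decoration change at all). So that appeal does not do what you want; what actually makes the maps agree is that the rotation paired with the specific arc-reorientation in Figure \ref{rotation} preserves the $A/B/C/D/E$ classification of each $\cf(1)$ and $\cf(2)$. Second, your proposed ``involution on partitions $(U,V)$ grouping terms that cancel in pairs'' does not make sense here: $\cff$ and $\cff'$ have the same arc index set, so decompositions are in canonical bijection and nothing is ``identified under the rotation.'' That mechanism belongs to a different kind of argument (it is the sort of cancellation used in Lemmas \ref{cf1} and \ref{cf2}), and invoking it here is a sign the key point has been missed. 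To repair the proof you would need to actually verify, for each local shape in Figure \ref{rotation}, that the decoration swap preserves the type of every subconfiguration containing $\gamma_r$ --- that is the entire content of the lemma, and it is what the paper's one-sentence proof is silently certifying.
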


\begin{proof}
The second row of Figure \ref{rotation} indicates how to modify the decorations. 
Using these decorations all the maps in $(\cff,\pp)$ agree with the maps in $(\cff',\pp')$.
\end{proof}

\begin{figure}
\mbox{\vbox{\epsfbox{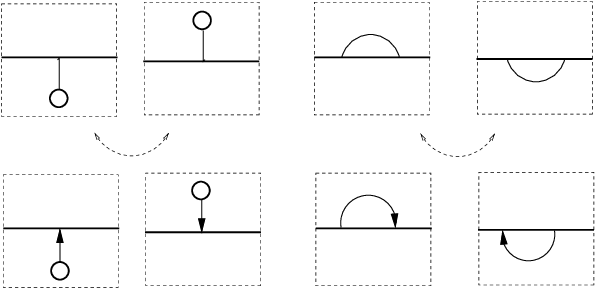}}}
\caption{ Rotation}
\label{rotation}
\end{figure}

\begin{lemma}\label{cf1}
Suppose that $\cff$ contains an active starting circle $x_1$ that 
meets only one of $\gamma$ arcs, say $\gamma _1$. We will call $x_1$ a degree
$1$ starting circle. 
Let $x_2$ denote the other circle that meets $\gamma _1$, and let
$a\in V_0(\cff)$ be a monomial.

\begin{itemize}
\item If  $a$ 
  is not divisible by $x_1$ then Theorem \ref{ki} holds
for $(\cff,a)$.
\item If $a$ is divisible by $x_2$, then
 Theorem \ref{ki} holds
for $(\cff,a)$.
\end{itemize}
\end{lemma}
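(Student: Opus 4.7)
By Lemma \ref{ind} together with the inductive hypothesis that Theorem \ref{dd} holds for configurations of dimension less than $k$, the coefficient at $b$ of
$$\sum_{i=1}^{k-1}\D_{k-i}(\pp)\bigl(\D_i(\pp)(a)\bigr)$$
is independent of the decoration $\pp$, so the plan is to fix any convenient $\pp$ on $\cff$ and analyze the sum decomposition by decomposition. Each decomposition $\{1,\dots,k\}=U\sqcup V$ with $U,V$ non-empty contributes $F_{\cf(V,\pp)}\bigl(F_{\cf(U,\pp)}(a)\bigr)\big|_b$. The structural observation that drives everything is that $x_1$ is an active starting circle of $\cf(U,\pp)$ precisely when $\gamma_1\in\cf(U)$, i.e.\ when $1\in U$; otherwise $x_1$ is passive there and the extension formula of Definition \ref{extension} factors its $a|_{x_1}$-value out unchanged.

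For part (i), I would first prove that every decomposition with $1\in U$ and $|U|\geq 2$ gives a zero contribution. Because $x_1$ has degree one in $\cf(U,\pp)$, the classification of Section \ref{higher} leaves only a short list of possibilities and each is ruled out: type $A_{|U|}$ would force every pair $(\gamma_1,\gamma_j)$ to form a type-$1$ two-dimensional sub-configuration, which requires $x_1$ to meet every arc; types $B_{|U|}$, $D_{p,q}$, and $E_{p,q}$ with $p\geq 1$ yield a nonzero $F$-value only on inputs containing every (degree-one) active starting circle as a factor, incompatible with $a|_{x_1}=1$; and types $C_{p,q}$ and $E_{0,q}$ demand that $x_1$ be the unique central starting circle meeting every arc, again incompatible with degree one when $|U|\geq 2$. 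Only the decompositions with $1\in V$ together with the extreme decomposition $U=\{1\}$ therefore survive. For these, I would exhibit a reduction to the $(k-1)$-dimensional configuration $\cff'$ obtained by isotoping $x_2$ to absorb $x_1$ along $\gamma_1$ and deleting the arc $\gamma_1$. Using the extension formula and the fact that, with $a|_{x_1}=1$, $\gamma_1$ merges $x_1$ onto $x_2$ identically, each surviving term matches a term of $\sum_{i=1}^{k-2}\D_{k-1-i}(\pp')\bigl(\D_i(\pp')(a')\bigr)\big|_{b'}$ on $\cff'$, where $a'$ and $b'$ are the induced monomials with the $x_1$-slot removed. The inductive case of Theorem \ref{dd} applied to $\cff'$ then gives vanishing of this latter sum, completing (i).

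Part (ii) proceeds along the same lines, but the role of $a|_{x_1}=1$ is replaced by the filtration rule of Definition \ref{filtration}: since $a$ is divisible by $x_2=x(P)$ for any point $P\in x_2$, every nonvanishing contribution $F_{\cf(U,\pp)}(a)$ must carry a $y(P)$-factor on the ending circle through $P$. Combined with the degree-one property of $x_1$, this again severely restricts which types of $\cf(U,\pp)$ can support a nonzero contribution when $1\in U$, and it enables the same reduction to a $(k-1)$-dimensional $\cff'$, this time with the $x_2$-factor of $a$ transported to the circle absorbing $x_1$.

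\textbf{Main obstacle.} The technical heart of the argument is the reduction on the surviving terms to the $(k-1)$-dimensional configuration $\cff'$. The intuition is transparent---a degree-one $x_1$ carrying value $1$, or with its neighbour $x_2$ carrying the factor $x_2$, is invisible to the arc $\gamma_1$, so the surviving terms should match term-by-term after absorbing $x_1$---but making this rigorous requires a type-by-type check mirroring the case analysis in the proof of Theorem \ref{Relation}: one must verify that the $F$-values on $\cff$ and $\cff'$ agree on matching monomials, and in case (ii) that the filtration-constraints on both sides correspond.
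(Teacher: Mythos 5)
Your opening paragraph is sound, and your classification showing that decompositions with $1 \in U$ and $|U| \geq 2$ contribute zero is correct and matches the paper's reasoning. But there is a genuine gap after that, and the reduction you propose does not repair it.

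The observation you are missing is that the very same argument also kills the decompositions with $1 \in V$ and $|V| \geq 2$. When $\gamma_1 \notin \cf(1)$ the circle $x_1$ is \emph{passive} in $\cf(1)$, so by the extension formula any nonzero intermediate monomial $z = F_{\cf(1)}(a)$ satisfies $z|_{x_1} = a|_{x_1} = 1$; and $x_1$ is still a degree-one active starting circle of $\cf(2)$ (it is untouched by the surgeries of $\cf(1)$). The identical type-by-type check you ran on $\cf(1)$ now runs on $\cf(2)$ and forces $F_{\cf(2)}(z) = 0$. This collapses the sum to exactly two surviving decompositions, $U = \{1\}$ and $U = \{2,\dots,k\}$, and the paper finishes by showing those two terms are literally equal (hence cancel mod $2$) — again by the extension formula: with $a|_{x_1} = 1$, joining $x_1$ to $x_2$ before or after applying the remaining $(k-1)$ arcs yields the same element. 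No appeal to the inductive hypothesis for $\cff'$ is needed.

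Your proposed reduction to $\cff'$ does not work as stated for a combinatorial reason: you believe roughly $2^{k-1}$ decompositions survive, while the sum $\sum_{i=1}^{k-2}\D_{k-1-i}(\pp')\D_i(\pp')(a')$ over $\cff'$ has only $2^{k-1}-2$ terms, so a term-by-term matching is impossible. Moreover the two decompositions that actually do survive, $U=\{1\}$ and $V=\{1\}$, each consist of a \emph{single} $(k-1)$-dimensional $F$-application pre- or post-composed with a join edge — they are not compositions of two proper pieces of $\cff'$ and correspond to no summand of the $\cff'$ double sum. For part (ii) the situation is even cleaner than you expect: after using part (i) to reduce to $a$ divisible by $x_1 x_2$, one checks directly that every decomposition vanishes ($1 \in U$ forces $F_{\cf(1)}(a)=0$ because no admissible type accepts a degree-one circle as a factor in that position; $1 \notin U$ forces $F_{\cf(1)}(a)$ to be divisible by $x_1$ and, via the filtration rule at the point $\gamma_1 \cap x_2$, by the circle $w$ through that point, so $F_{\cf(2)}$ kills it). There is nothing left to reduce or cancel, and no induction on $\cff'$ is invoked.
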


\begin{proof}
We will use the notation of $\cf(1)$, $\cf(2)$, $U$ and $V$ as above. If $a$ is not divisible by $x_1$ then there are only $2$ possible
decompositions 
with $F_{\cf(2)}(F_{\cf(1)}(a))\neq 0$, corresponding to 
$U=\{ 1\}$ and $U'=\{2,....,k\}$. However these terms cancel
each other according to the extension formula. 

For the second part, it is enough to consider the case when 
$a$ is divisible by $x_1x_2$. It follows that $F_{\cf (1)}(a)=0$ if $1\in U$. If $1\notin U$, let $w$ denote
the ending circle of $\cf(1)$ that contains the intersection point between 
 $\gamma _1$ and  $x_2$. According to the 
filtration rule  $F_{\cf(1)}(a)$ is divisible by $x_1\cdot w$. This implies that its image under $F_{\cf(2)}$ 
is trivial.
\end{proof}

\begin{lemma}\label{cf2}
Supposes that $\cff$ contains an active starting circle $x_1$ that is connected to the other circles by  two of the 
$\gamma$ arcs $\gamma _1$ and $\gamma _2$,
see Figure \ref{deg2}. Let $a\in V_0(\cff)$ denote a monomial, and  $x$, 
$x'$ denote the other circles that meet $\gamma _1$ and $\gamma _2$. If $x=x'$ define $p=x$. Otherwise define $p=x\cdot x'$
\begin{itemize}
\item If $a$ is not divisible by $x_1$, then  Theorem \ref{ki} holds
for $(\cff,a)$.
\item If $a$ is divisible with $p\cdot x_1$
then  Theorem \ref{ki} holds
for $(\cff,a)$.
\end{itemize} 
\end{lemma}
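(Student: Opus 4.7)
\textbf{Plan for Lemma \ref{cf2}.} The approach parallels the proof of Lemma \ref{cf1}. I would fix a decoration $\pp$ on $\cff$ and, for each decomposition $(U, V)$ of $\{1, \ldots, k\}$ with $|U|, |V| \geq 1$, write $\cf(1), \cf(2)$ for the corresponding sub-configurations. The composition $F_{\cf(2)} \circ F_{\cf(1)}$ applied to $a$ is then analyzed based on $|U \cap \{1, 2\}|$, which determines whether $x_1$ is passive, degree-$1$, or degree-$2$ in $\cf(1)$.

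For Part 1, with $a$ having value $1$ at $x_1$, I would first rule out contributions when $x_1$ sits as a degree-$1$ starting circle in a $\cf(1)$ of dimension $\geq 2$. Following the reasoning of Lemma \ref{cf1}, a degree-$1$ starting circle with input $1$ can participate non-trivially only in the joining-edge case, since the only higher-dimensional type with degree-$1$ starting circles is $E_{p,q}$, and there the non-central starting circles must carry the value $x_i$, not $1$. This restricts the surviving decompositions to those with $U \in \{\{1\}, \{2\}\}$ (or symmetrically $V \in \{\{1\}, \{2\}\}$), together with those in which $\{1, 2\} \subset U$ or $\{1, 2\} \subset V$ so that $x_1$ is passive in one half. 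I would then pair the survivors so that paired contributions coincide by the extension formula: the swap $(U, V) \leftrightarrow (U \triangle \{1, 2\}, V \triangle \{1, 2\})$ exchanges the two decompositions in the degree-$1$ category, while a ``both arcs in the same half'' pairing handles the passive/degree-$2$ category. Over $\Z_2$ the paired contributions then cancel.

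For Part 2, with $a$ divisible by $p \cdot x_1$, I would apply the filtration rule at each of the four arc-endpoints of $\gamma_1$ and $\gamma_2$ on $x_1, x, x'$. This forces the intermediate and final outputs to be divisible by specific ending circles, sharply constraining the types of $\cf(1), \cf(2)$ that can contribute non-trivially. After this reduction, a pairing argument parallel to Part 1 shows cancellation; in some cases the duality rule (Lemma \ref{dual2}) can be invoked to reduce the situation directly to Part 1 applied to $m(\cff^\ast)$, since the dual configuration again has a degree-$2$ circle with the appropriate structure.

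The main obstacle will be verifying that each pair of surviving decompositions produces equal terms in $\Z_2$. This requires detailed bookkeeping through the type classification $A, B, C, D, E$ for both $\cf(1)$ and $\cf(2)$, and checking that swapping the roles of $\gamma_1$ and $\gamma_2$ (or moving both arcs together between $\cf(1)$ and $\cf(2)$) preserves the $F$-contribution. An additional subtlety arises when $x = x'$: the local $2$-dimensional sub-configuration formed by $x_1, x, \gamma_1, \gamma_2$ is then of a different type (with two parallel arcs between the same pair of circles rather than three circles connected in a chain), and the corresponding cases must be enumerated separately.
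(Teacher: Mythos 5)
Your proposal diverges from the paper's argument in a way that leaves a real gap. The paper does \emph{not} run a generic pairing argument: it selects a specific decoration (the one shown in Figure~\ref{deg2}), and this choice is load-bearing. For Part~1, with $a$ not divisible by $x_1$, the decompositions with $\{1,2\}\subset U$ or $\{1,2\}\subset V$ are \emph{not} ruled out by any degree-one argument: if $\{1,2\}\subset U$, the sub-configuration $\cf(1)$ can be of Type~$E$ with $x_1$ as its central starting circle (e.g.\ $E_{2,0}$, i.e.\ Type~$2$/$3$ of Figure~\ref{2D}), and since the central circle carries the value $1$ in the $E$-rule, $F_{\cf(1)}(a)$ can be non-zero. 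The paper's chosen decoration orients $\gamma_1,\gamma_2$ so that this $2$-dimensional sub-configuration becomes one of the trivial types (Type~$10$/$11$), eliminating those contributions outright; only $U=\{1\}$ and $U'=\{2\}$ survive, and these two cancel precisely because the decoration is symmetric under swapping $\gamma_1$ and $\gamma_2$. Your proposed pairing $(U,V)\leftrightarrow(U\triangle\{1,2\},\,V\triangle\{1,2\})$ for the remaining terms has no justification: moving $\{\gamma_1,\gamma_2\}$ between the two halves changes the types of $\cf(1)$ and $\cf(2)$ completely (e.g.\ Type~$E$ versus passive), and there is no reason the contributions coincide. You also list $V\in\{\{1\},\{2\}\}$ as a surviving possibility, but those are excluded by your own degree-one observation (they force $|U|\ge 2$ with $|U\cap\{1,2\}|=1$).

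For Part~2 the paper again picks a specific decoration (right-hand side of Figure~\ref{deg2}) and shows that \emph{every} decomposition contributes zero: $\{1,2\}\subset U$ is killed because the decoration is incompatible with $\cf(1)$ being Type~$B$ or~$D$; $|U\cap\{1,2\}|=1$ is killed because a join edge or Type~$E$ cannot accept the given input; and $\{1,2\}\subset V$ is killed by the filtration rule combined with the decoration on $\cf(2)$. No pairing is needed, and the duality reduction you suggest does not work as stated: dualizing a degree-$2$ \emph{starting} circle produces a degree-$2$ \emph{ending} circle in $m(\cff^\ast)$, so Part~1 of this lemma (which is a hypothesis on a starting circle) does not apply to the dual triple. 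The missing idea throughout is that Theorem~\ref{ki} is an existence statement over decorations, and the proof must exploit that freedom by \emph{choosing} $\pp$ to annihilate the problematic terms rather than trying to cancel them in pairs for an arbitrary $\pp$.
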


\begin{proof}
For the first statement use the decoration as in the center of Figure \ref{deg2}. Consider 
decompositions with $F_{\cf(2)}(F_{\cf(1)}(a))\neq 0$. If $a$ is not divisible by
$x_1$ then there are only two possible non-trivial terms, corresponding to decompositions with
$U=\{1\}$ and $U'=\{2\}$. 
However according to the chosen decoration $\cf(2)$ and $\cf(2)'$ are equivalent configurations. 
Since $a$ is not divisible by $x_1$ we also have $F_{\cf(1)}(a)=F_{\cf(1) '}(a)$. It follows that the  
contributions  cancel each other. 

For the second part use the decoration as in the right of Figure \ref{deg2}. Suppose that
$F_{\cf(1)}(a)\neq 0$. If $\{1,2\}\subset U$, then  $a$ being divisible with $p\cdot x_1$  implies that
$\cf(1)$ is Type $B$ or $D$, but that contradicts the choice of decoration. If $1\in U$, $2\in V$ then
 $\cf(1)$ has to be a join edge or Type $E$, 
but that contradicts $F_{\cf}(a)\neq 0$ and the choice of $a$. The case $1\in V$, $2\in U$ is ruled out 
the same way. It remains to consider $\{1,2\}\subset V$. In this case the filtration rule implies
that $F_{\cf(1)}(a)$ is divisible by $x_1$ and the  other (one or two) ending circles of $\cf(1)$
 that meets $\gamma _1$ and $\gamma _2$. So $F_ {\cf(2)}(F_{\cf(1)}(a))\neq 0$, implies that $\cf(2)$ is  Type $B$ or $D$.
However that again contradicts the chosen decoration.
\end{proof}

\begin{figure}
\mbox{\vbox{\epsfbox{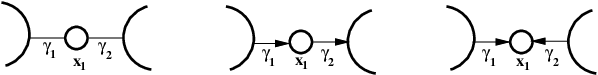}}}
\caption{}
\label{deg2}
\end{figure}

\begin{lemma}\label{trad2}
Let $\cff$ and $\cff'$ be configurations that differ by a trading move of 
Figure \ref{trading}. Then Theorem \ref{ki} holds for $\cff$ if and only if it holds 
for $\cff '$. 
\end{lemma}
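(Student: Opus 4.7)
The approach will mirror Lemma \ref{rot1} together with the reduction lemmas preceding the statement: for any decoration $\pp$ on $\cff$ realizing the conclusion of Theorem \ref{ki} at the triple $(\cff,a,b)$, I will produce a matched decoration $\pp'$ on $\cff'$ realizing the conclusion at $(\cff',a,b)$, by comparing the double sums
$$\sum_{i=1}^{k-1}\D_{k-i}(\pp)\bigl(\D_i(\pp)(a)\bigr)\quad\text{and}\quad\sum_{i=1}^{k-1}\D_{k-i}(\pp')\bigl(\D_i(\pp')(a)\bigr)$$
term by term according to the decomposition $\{1,\dots,k\}=U\sqcup V$ that produces each pair $(\cf(1),\cf(2))$.

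First I would set up the local picture. Let $\gamma_1,\gamma_2$ be the two arcs that participate in the trading move of Figure \ref{trading}, so that $\cff$ and $\cff'$ agree outside a small disk $\Delta$ containing these two arcs and their endpoints on the nearby circles. Outside $\Delta$ I identify the circles and the remaining arcs $\gamma_3,\dots,\gamma_k$ between $\cff$ and $\cff'$, and by Lemma \ref{ind} I may assume $\pp$ and $\pp'$ already agree on $\gamma_3,\dots,\gamma_k$. The task therefore reduces to picking compatible orientations of $\gamma_1$ and $\gamma_2$ on the two sides so that every joint contribution $F_{\cf(2)}(F_{\cf(1)}(a))$ matches.

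Next I would split the index subsets $U$ by the position of $\{1,2\}$. When $\{1,2\}\subset U$ or $\{1,2\}\subset V$, the sub-configuration containing both traded arcs is supported in $\Delta$ glued to the unchanged exterior, while the complementary sub-configuration lies entirely outside $\Delta$ and is identified across $\cff$ and $\cff'$ by a sphere diffeomorphism, so its $F$-map is unchanged by the Naturality Rule. Checking that the type of the sub-configuration carrying both $\gamma_1,\gamma_2$ transforms predictably under the trade — as an $A_k$, $B_k$, $C_{p,q}$, $D_{p,q}$ or $E_{p,q}$ configuration, or as one of the listed $2$-dimensional types when $|U|=2$ or $|V|=2$ — is then a routine inspection of Figure \ref{trading} together with the explicit rules of Section \ref{higher}.

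The main obstacle is the mixed case $\gamma_1\in U,\ \gamma_2\in V$ (and its symmetric counterpart), where each of $\cf(1),\cf(2)$ contains exactly one of the two traded arcs and the endpoint swap may genuinely alter each individual factor. Here I would argue that either $a$ fails the divisibility hypothesis needed for a non-zero contribution, in which case Lemma \ref{cf1} or Lemma \ref{cf2} applied to the degree $1$ or degree $2$ starting circle produced by the trade already closes the case; or else the Filtration Rule forces the surviving terms on the two sides to correspond in pairs across the $U\leftrightarrow V$ decompositions, with equal coefficients modulo $2$. The hard part will be verifying that no uncancelled mixed contribution survives when the trade creates a new active circle whose divisibility relation to $a$ and $b$ is only partially pinned down by the hypotheses; I expect this to reduce, after choosing the orientations of $\gamma_1,\gamma_2$ as in the right-hand picture of Figure \ref{deg2}, to a short case check exactly parallel to the one in Lemma \ref{cf2}.
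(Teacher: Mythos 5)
There is a genuine gap at the very start: you have misread the trading move. Looking at how the paper uses Figure \ref{trading} (and at the text of the paper's own proof), the trading move replaces a \emph{single} arc, not a pair. Specifically, $\cff$ contains a degree-$1$ starting circle $x_1$ attached by the arc $\gamma_1$ to a circle $x_2$, so that locally $\gamma_1$ is a joining arc producing one ending circle $y_1$; the traded configuration $\cff'$ replaces this by a single starting circle $x_2'$ carrying a splitting arc $\gamma_1'$, with two ending circles $y_1'$ (degree $1$) and $y_2'$. In particular $\cff$ and $\cff'$ have the \emph{same} arc index set $\{1,\dots,k\}$ but a different circle pattern near index $1$, and no second arc $\gamma_2$ is involved. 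Your entire case split — ``$\{1,2\}\subset U$'', ``$\{1,2\}\subset V$'', and the ``mixed case $\gamma_1\in U,\ \gamma_2\in V$'' — is therefore not set up correctly; there is only one distinguished index to track. The analogy with Lemma \ref{rot1} is also misleading here: rotation is a decoration-level symmetry that leaves the underlying configuration unchanged, whereas trading genuinely changes the number of active starting and ending circles, so it cannot be implemented by a mere relabelling of $V_0,V_1$ plus a choice of $\pp'$ matching $\pp$.

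The paper's argument is instead a reduction-and-match argument. By the first and second parts of Lemma \ref{cf1} together with the filtration rule, it suffices to treat $(\cff,a,b)$ with $a=x_1\cdot w$ and $b=y_1\cdot p$, for $w\in W$, $p\in P$ in the tensor decomposition $V_0(\cff)=V(x_1)\otimes V(x_2)\otimes W$, $V_1(\cff)=V(y_1)\otimes P$; dually, using Lemma \ref{dual2} and Lemma \ref{cf1} applied to $m(\cff'^\ast)$ (where $y_1'$ is the degree-$1$ starting circle), it suffices to treat $(\cff',a',b')$ with $a'=w$ and $b'=y_2'\cdot p$. Once both sides are reduced to these matched families, the coefficient of $F_{\cf(2)}(F_{\cf(1)}(a))$ at $b$ equals the coefficient of $F_{\cf'(2)}(F_{\cf'(1)}(a'))$ at $b'$ term by term over the decompositions $\{1,\dots,k\}=U\sqcup V$, and the conclusion follows. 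The place where your sketch leaves things to a vague expectation (``I expect this to reduce to a short case check'') is precisely where the dual reduction via Lemma \ref{dual2} and Lemma \ref{cf1} is doing the substantive work; without it, and with the arc count wrong, the argument does not go through.
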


\begin{proof}
Let $\cff$ denote the local configuration in the far left of Figure \ref{trading}, and  $y_1$ denote the ending circle of $\cff$ that is given by merging $x_1$ and $x_2$. Also let
$y_1'$ denote the ending circle of $\cff '$ that meets only $
(\gamma _1 ')^\ast$, and $y_2$ be the other ending circle of $\cff'$ that meets $
(\gamma _1 ')^\ast$. Let's write $V_0(\cff)= V(x_1)\otimes V(x_2)\otimes W$ and $V_1(\cff)=V(y_1)\otimes P$.
Then 
$V_0(\cff ')= V(x_2')\otimes W$ and $V_1(\cff ')=V(y_1 ')\otimes V(y_2 ')\otimes P$.
 According to Lemma \ref{cf1} and the filtration rule, it is enough to consider $(\cff, a,b)$   where 
$a=x_1\cdot w$ and  $b=y_1\cdot p$, $w\in W$, $p\in P$.
Similarly using Lemma \ref{dual2} and Lemma \ref{cf1} it is enough to consider $(\cff, a',b')$
where $a'=w$ and $b'=x_2'\cdot p$. On the other hand, for a fixed pair $(w,p)$ and  fixed $U,V$ decomposition
the coefficient of $F_{\cf(2)}(F_{\cf(1)}(a))$ at $b$ is equal to
the coefficient of $F_{\cf(2)'}(F_{\cf(1)'}(a'))$ at $b'$.

\end{proof}

\begin{lemma}\label{prod}
Suppose that $a\in V_0(\cff)$ is the product of all the active starting circles. If the number of
active starting cicles of $\cff$ is greater than 1, 
then Theorem \ref{ki} holds for $(\cff, a)$
\end{lemma}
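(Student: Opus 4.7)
By Definition \ref{extension} we may assume $\cff$ is active, so that $a$ is the product of all starting circles of $\cff$. Applying the filtration rule (Definition \ref{filtration}) to $F_{\cf(1)}$ and then to $F_{\cf(2)}$ shows that for any decomposition $\cff = \cf(1) \ast \cf(2)$ and any monomial $b \in V_1(\cff)$, the coefficient of $F_{\cf(2)}(F_{\cf(1)}(a))$ at $b$ vanishes unless $b$ is divisible by every ending circle of $\cff$; hence it suffices to verify that the coefficient at $b = \prod_j y_j$ is zero.

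Scanning the active maps in Sections 3 and 4, the only types $\cf_0$ with $F_{\cf_0}(\prod_i x_i) \neq 0$ are $B_k$ and $D_{p,q}$, and in both cases the image equals $\prod_j y_j$. Thus only decompositions in which both $\cf(1)_0$ and $\cf(2)_0$ are of type $B$ or $D$ can contribute. Since these types require dimension $\geq 2$, the statement is immediate when $k = 3$, one of the two halves being necessarily one-dimensional. For $k \geq 4$, since $a$ is divisible by every active circle, the divisibility hypotheses of Lemma \ref{cf1} ($a$ divisible by $x_2$) and of Lemma \ref{cf2} ($a$ divisible by $p \cdot x_1$) are automatic, so those lemmas close the argument whenever $\cff$ contains an active starting circle of degree one or two in the relevant local form. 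It remains to handle the case where every active starting circle of $\cff$ has degree at least three, in which case the arc-endpoint count yields $2k \geq 3s$ with $s \geq 2$.

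In this high-degree regime the plan is to exhibit a decoration $\pp$ for which every sub-configuration $\cf(1)_0$ and $\cf(2)_0$ of dimension $\geq 2$ falls outside types $B$ and $D$, hence into types $A$, $C$, or $E$ (each of which annihilates $\prod_i x_i$ on the top input). Types $B_i$ and $D_{p,q}$ are very rigid: $B_i$, being the dual-mirror of $A_i$, consists of $i$ starting circles in a cyclic arrangement joined by arcs carrying a prescribed orientation pattern, while $D_{p,q}$ is dual to $C_{p,q}$ and similarly concentrated. The degree-$\geq 3$ hypothesis leaves enough freedom in the choice of arc orientations to ensure that these patterns can be avoided simultaneously on every subface, and once such a $\pp$ is produced, Lemma \ref{ind} upgrades the vanishing to arbitrary decorations. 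The main technical obstacle is the combinatorial case analysis, driven by the inequality $2k \geq 3s$ and $s \geq 2$, showing that a valid $\pp$ always exists.
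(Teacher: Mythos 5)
Your proposal does not close the argument, and it also diverges substantially from the paper's much shorter proof.

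The first gap is concrete: your classification of which active types send the product of starting circles to the product of ending circles is incomplete. A splitting edge (one active starting circle $x_1$, map $x_1\mapsto y_1y_2$) also does this, so one-dimensional pieces $\cf(1)$ or $\cf(2)$ are \emph{not} ruled out. Consequently, your claim that the $k=3$ case is ``immediate, one of the two halves being necessarily one-dimensional'' fails: a split edge composed with a Type $B$ or $D$ two-dimensional piece is a genuine candidate contribution that you never address.

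The second gap is more structural. After invoking Lemmas \ref{cf1} and \ref{cf2} to reduce to the case where every active starting circle has degree at least three, you assert that ``the degree-$\geq 3$ hypothesis leaves enough freedom in the choice of arc orientations'' to avoid Types $B$ and $D$ on every subface, but you never exhibit such a decoration or explain why it exists. That is precisely the content that a proof must supply, and the arc-endpoint inequality $2k\geq 3s$ does not by itself produce one. In fact the paper resolves the whole lemma without Lemmas \ref{cf1}, \ref{cf2}, or any degree bookkeeping: fix one active starting circle $x_1$, let $W$ be the index set of arcs joining $x_1$ to some $x_i$ with $i\geq 2$ (nonempty by connectedness), and orient every such arc away from $x_1$. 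Then $F_{\cf(1)}(a)\neq 0$ forces $\cf(1)$ to be a split edge or Type $B$/$D$, which with this decoration forces $U\cap W=\emptyset$; hence all the $W$-arcs sit in $\cf(2)$, and the same orientation choice now prevents $\cf(2)$ from being a split edge or Type $B$/$D$, so the composition vanishes for every decomposition. Your proposal's plan, even if the handwave were filled in, would be considerably more laborious than this single explicit choice.
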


\begin{proof}
It is enough to consider the active part of $\cff$. 
Let $W\subset \{ 1,...,n\}$ denote the index set of the $\gamma$ arcs that connect  $x_1$ to  $x_i$ with $i\geq 2$.  Since $\cff$ is connected
it follows that $W$ is nonempty. 
Orient all these arcs away from $x_1$. Suppose $F_{\cf(2)}(F_{\cf(1)}(a))\neq 0$, where $a$ is the product of all the starting circles.
Since $F_{\cf(1)}(a)\neq 0$, it follows that either $\cf(1)$ is a split edge or it is Type $B$ or $D$. It follows from the chosen decoration
that $W$ is disjoint from the index set $U$ of $\cf(1)$. 
 Now according to the duality rule $F_{\cf(1)}(a)$ is the product of the starting circles of $\cf(2)$.
However the chosen decoration of the $\gamma _j$ arcs with $j\in W$ imply that $\cf(2)$ is not a split edge, neither Type $B$ or Type $D$, so in fact
$F_{\cf(2)}(F_{\cf(1)}(a))=0$.
\end{proof}

\begin{figure}
\mbox{\vbox{\epsfbox{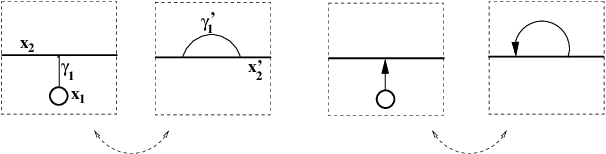}}}
\caption{ Trading}
\label{trading}
\end{figure}

\begin{figure}
\mbox{\vbox{\epsfbox{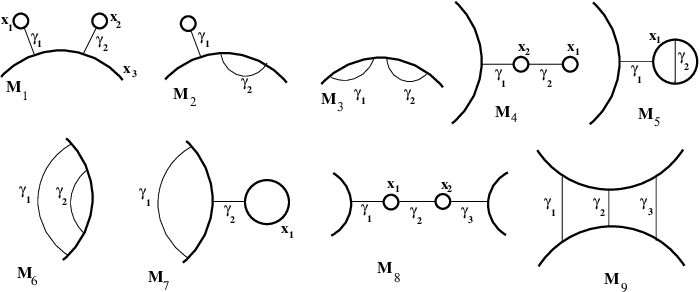}}}
\caption{}
\label{wr1}
\end{figure}

\begin{lemma}\label{pro1}
Suppose that the pair $\cff$ contains at least one of the local
 configurations $M_1,M_2,...,M_9$  in 
Figure \ref{wr1}. Then $\cff $ satisfies Theorem \ref{ki}.
\end{lemma}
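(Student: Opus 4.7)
The plan is to verify Theorem \ref{ki} for each of the nine local patterns $M_1,\dots,M_9$ in turn, following the template already established by Lemmas \ref{cf2} and \ref{prod}. For each $M_i$ one chooses a specific orientation on the arcs of $\cff$ that lie inside the local pattern $M_i$, while leaving the remaining arcs of $\cff$ with an arbitrary orientation. For fixed monomials $a\in V_0(\cff)$ and $b\in V_1(\cff)$, one then expands
\[
\sum_{i=1}^{k-1}\D_{k-i}(\pp)\bigl(\D_i(\pp)(a)\bigr)
=\sum_{U\sqcup V=\{1,\dots,k\}} F_{\cf(2)}\bigl(F_{\cf(1)}(a)\bigr),
\]
where $\cf(1)$ uses the arcs indexed by $U$ and $\cf(2)$ uses those indexed by $V$, and shows that the coefficient of this sum at $b$ vanishes.

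For each pattern $M_i$ the argument splits into two kinds of contributions. First, most decompositions $U\sqcup V$ have the property that the local restriction of $\cf(1)$ or $\cf(2)$ to $M_i$ forces triviality: combined with the filtration rule (Definition \ref{filtration}), the extension formula (Definition \ref{extension}), and the disconnected rule (Definition \ref{disconnected}), the chosen orientations on the arcs of $M_i$ make the resulting subconfiguration inconsistent with any nonzero $F$-map (for example, forcing a type $B$ or $D$ local piece when none of the active starting circles in $a$ admit it, or producing a split-edge factor whose filtration-constrained image cannot be the restriction of $b$). Second, the surviving decompositions come in pairs related by swapping one or two arcs of $M_i$ between $U$ and $V$; with the chosen decoration the two members of each pair yield configurations that are equivalent under orientation-preserving diffeomorphism, and the Naturality Rule (Definition \ref{naturality}) together with the extension formula makes their contributions to the coefficient at $b$ identical, hence they cancel modulo $2$.

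Before grinding through all nine cases it is worth invoking the earlier reductions to shrink the list. Lemma \ref{dual2} pairs each $M_i$ with its mirror-dual, Lemma \ref{rot1} allows the rotation moves, and Lemma \ref{trad2} the trading moves; together these should identify several of the $M_i$ and leave only a handful of genuinely distinct patterns to analyze by hand. Lemmas \ref{cf1}, \ref{cf2} and \ref{prod} further restrict the monomials $(a,b)$ that need to be considered to those where $a$ is not divisible by a degree-one circle and $b$ has the matching filtration, which cuts the case work inside each $M_i$ significantly.

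The main obstacle is purely combinatorial bookkeeping: for each $M_i$ one must write down the allowed splittings of the local arc index set, verify which give vanishing contributions for geometric reasons, and exhibit an explicit involution on the remaining splittings whose orbits cancel in pairs. There is no single uniform trick — each $M_i$ is presumably chosen precisely because it admits one such decoration-plus-involution — so the difficulty lies in assembling the nine individual arguments rather than in any one step. Once the reductions above are applied and the correct orientations on $M_i$ are fixed, each verification reduces to checking the types of the local $\cf(1)$ and $\cf(2)$ pieces against the $A,B,C,D,E$ lists of Section \ref{higher}.
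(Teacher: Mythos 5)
Your overall plan is in the right spirit and points at the right toolkit, but it is a sketch rather than a proof, and two of its structural claims are at odds with how the argument actually closes.

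First, the reductions. You are right that Lemmas \ref{dual2}, \ref{rot1}, and \ref{trad2} cut the list of nine patterns down substantially; the paper reduces $M_2,M_3$ to $M_1$ by rotation and trading, $M_5$ to $M_4$ by trading, and $M_6,M_7,M_9$ to $M_4,M_5,M_8$ respectively via the $\cff\mapsto m(\cff^\ast)$ symmetry, leaving only $M_1$, $M_4$, $M_8$ to check directly. You anticipated exactly this kind of shrinking. But you misstate the direction of the Lemma \ref{cf1}/\ref{cf2} reduction: those lemmas dispose of the case where $a$ is \emph{not} divisible by the degree-one circle (and the case where $a$ is divisible by its neighbor), so what survives and must be analyzed by hand is precisely the opposite case — $a$ divisible by $x_1$ but not by $x_2$ in the $M_1$ setting, or $a$ divisible by $x_1 x_2$ in the $M_8$ setting.

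Second, and more substantively, your proposed cancellation mechanism is not what happens here. You predict that after the geometric vanishings there will remain surviving decompositions which pair up under an involution and cancel via the Naturality Rule. That mechanism does occur elsewhere in the paper (for instance in Cases 9 and 10 of Lemma \ref{ki1}), but in the proof of Lemma \ref{pro1} itself there are \emph{no} surviving pairs: with the decorations of Figure \ref{wr2} and the monomial restrictions from Lemmas \ref{cf1}, \ref{cf2}, \emph{every} decomposition $U\sqcup V$ is shown individually to contribute zero. For $M_1$ the decoration kills $\{1,2\}\subset U$ and $\{1,2\}\subset V$ outright, and the filtration rule applied at a marked point $P$ on $x_1$ kills the split $1\in U,\,2\in V$ (and its mirror). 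For $M_8$, after reducing to $a$ divisible by $x_1x_2$, the decoration and filtration again rule out all six nontrivial splittings of $\{1,2,3\}$ one at a time. For $M_4$ the statement is entirely absorbed by Lemmas \ref{cf1} and \ref{cf2} and no explicit decomposition analysis is needed. So the key missing idea in your write-up is not a hidden involution, but the correct choice of decoration that makes every term vanish, plus the targeted use of the filtration rule at a well-chosen base point. Since you never fix a decoration or run any of these checks, the proposal as written does not actually establish the lemma.
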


\begin{figure}
\mbox{\vbox{\epsfbox{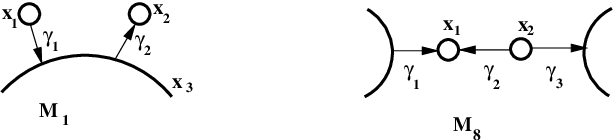}}}
\caption{}
\label{wr2}
\end{figure}

\begin{proof}
By the trading and rotation operations we can reduce the
$M_2$ and $M_3$ cases to $M _1$. Similarly $M_5$ 
can be traded to $M _4$. 
Furthermore the symmetry $\cff\rightarrow m(\cff ^\ast)$ maps
 $M_6$ to $M_4$, $M_7$ to $M_5$, and $M_9$ to $M_8$. 
So, by Lemma \ref{dual2}, \ref{trad2}
it is enough to consider the local configurations $M_1$, $M_4$ and $M_8$.

For $M_1$ 
let's write $V_0(\cff)=V(x_1)\otimes V(x_2)\otimes V(x_3)\otimes W$. It follows from Lemma \ref{cf1}
 that it is enough to consider the case $a=x_1x_2\cdot w$, where $w\in W$. We use the decoration as in Figure \ref{wr2}, and fix a point $P$ in the $x_1$ circle.
 We look at decompositions $U,V$ so that
$F_{\cf(2)}(F_{\cf(1)}(a))\neq 0$. The decoration rules out $\{1,2\}\subset U$, and $\{1,2\}\subset V$. In case $1\in U$ and $2\in V$, the filtration rule implies that
$F_{\cf(1)}(a)$ is divisible by $x_2\cdot y(P)$, and so $F_{\cf(2)}(F_{\cf(1)}(a))=0$. The case of $1\in V$, $2\in U$ is ruled out similarly.

For $M _4$ there are two cases to consider: If $a$ is divisible by $x_2$ then  the second part of Lemma \ref{cf1} applies.
If $a$ is not divisible by $x_2$ then the first part of Lemma \ref{cf2} finishes the argument. 

For $M_8      $ use the decoration in  Figure \ref{wr2}. By Lemma \ref{cf2} it is enough to consider the case when $a$ is 
divisible by $x_1x_2$. In the cases $\{1,2\}\subset U$, $\{1,3\}\subset U$ or $\{2,3\} \subset U$, the chosen decoration implies $F_{\cf(1)}(a)=0$. If 
$2\in U$, $\{1,3\}\subset V$ then $F_{\cf(1)}(a)=0$ since $a$ is divisible by $x_1x_2$. 
If $1\in U$, $\{2,3\}\subset V$, then $F_{\cf(1)}(a)$ is divisible by 
$x_2$ and so $F_{\cf(2)}(F_{\cf(1)}(a))=0$ because of the decoration.
The case of $3\in U$, $\{1,2\}\subset V$ is ruled out similarly. Finally if
 $\{1,2,3\}\subset V$ then $F_{\cf(2)}=0$ because of the decoration.

\end{proof}

\begin{figure}
\mbox{\vbox{\epsfbox{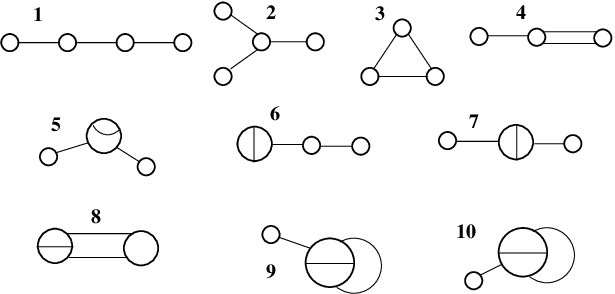}}}
\caption{ $3$-dimensional undecorated configurations
 modulo rotation, and the additional symmetry 
$\cff \longrightarrow m(\cff ^\ast)$}
\label{3D}
\end{figure}

\begin{lemma}\label{ki1}
If $\cff$ is $3$-dimensional, then Theorem \ref{ki} holds for $\cff$.
\end{lemma}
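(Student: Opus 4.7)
The plan is to proceed by case analysis on the list of 3-dimensional connected active undecorated configurations in Figure \ref{3D}, making heavy use of the reduction lemmas already in place. By Lemma \ref{dual2} it is enough to verify Theorem \ref{ki} for one of each pair $\cff, m(\cff^*)$, and by Lemma \ref{rot1} and Lemma \ref{trad2} we may further quotient by rotation and trading. Lemma \ref{pro1} then eliminates outright every configuration that contains one of the nine local pieces $M_1,\dots,M_9$ of Figure \ref{wr1}. So the first step is to identify the short list of representatives of Figure \ref{3D} that survive all of these reductions, and this is where the careful classification in Figure \ref{3D} combined with the trading/rotation/duality equivalences does most of the work.

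For each $\cff$ on that surviving list, the next step is to narrow the pairs $(a,b)$ of monomials that still need to be examined. Lemma \ref{cf1} removes almost all $a$ once $\cff$ has a degree-$1$ starting circle: $a$ must be divisible by the degree-$1$ circle but not by its neighbor along the adjacent $\gamma$ arc. Lemma \ref{cf2} plays the analogous role for degree-$2$ starting circles, and Lemma \ref{prod} handles the case in which $a$ equals the product of all active starting circles. Applying these same three lemmas to $m(\cff^*)$ (so as to use Lemma \ref{dual2}) cuts down the possibilities for $b$ in exactly the same fashion. After all of these reductions only a small collection of triples $(\cff,a,b)$ remains.

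For each such remaining triple, the plan is to choose a specific decoration $\pp$ by orienting the $\gamma$ arcs so that as many $2$- and $3$-dimensional subconfigurations as possible fall outside the list of types $A,B,C,D,E$ (and hence contribute $0$ to $F$). Then I enumerate the three partitions $\{1,2,3\}=U\sqcup V$ with $|U|\in\{1,2\}$ and, for each, compute the contribution of $F_{\cf(2)}(F_{\cf(1)}(a))$ to the coefficient at $b$ using the explicit formulas of Section \ref{higher} together with the grading and filtration rules. The decoration will be engineered so that either every surviving term is individually zero, or the remaining nonzero terms pair up and cancel by the extension formula, exactly as in the proofs of Lemmas \ref{cf1}, \ref{cf2}, \ref{prod}, and \ref{pro1}.

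The main obstacle is organizational rather than conceptual: all of the underlying ideas are already present in the preparatory lemmas, and the remaining work is a finite verification. The delicate point is the bookkeeping involved in confirming that after invoking Lemma \ref{pro1} and all of the equivalences, only a handful of representatives of Figure \ref{3D} genuinely need to be treated by hand, and that for each of them a single carefully chosen decoration makes the boundary-of-boundary coefficient vanish simultaneously for every $(a,b)$ left after the reductions. Keeping the enumeration of partitions $U\sqcup V$ synchronized with the filtration and grading constraints is where the checking is most error-prone.
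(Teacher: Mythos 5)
Your proposal matches the paper's proof of Lemma~\ref{ki1} in every essential respect: the paper takes the list of connected active $3$-dimensional configurations modulo rotation and $\cff \mapsto m(\cff^\ast)$ from Figure~\ref{3D}, applies Lemma~\ref{pro1} to discard the cases containing one of the $M_i$, and for each of the remaining four cases reduces the admissible $a$ via Lemmas~\ref{cf1} and~\ref{cf2} and then selects a single decoration (Figure~\ref{3Darrows}) under which the boundary-of-boundary terms either all vanish (Cases $4$, $8$) or cancel in pairs (Cases $9$, $10$) --- precisely the two outcomes you describe. The only slips are cosmetic: the paper's reduction does not invoke trading at this stage, and for a $3$-dimensional configuration there are six (not three) ordered decompositions $U\sqcup V$ to run through.
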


\begin{proof}
Figure \ref{3D} lists all the connected active $3$ dimensional
 configurations, modulo rotation moves, and the additional move
${\cff}\rightarrow m(\cff ^*)$. According to Lemma \ref{rot1} and \ref{dual2} it is enough to consider these cases. 
Among these, Lemma \ref{pro1}
 settles Cases 1, 2, 3, 5 and 6 and 7.
The rest of the configurations 
are given in Figure \ref{3Darrows} with a choice of decoration.

\begin{figure}
\mbox{\vbox{\epsfbox{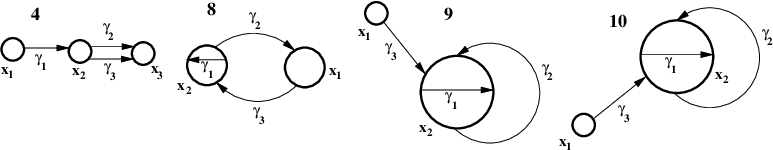}}}
\caption{}
\label{3Darrows}
\end{figure}

For Case 4 it is enough to consider $a=x_1x_3$ by Lemma \ref{cf1} and \ref{cf2}.
Using the decoration in Figure \ref{3Darrows} we see that for all 
six decompositions
$F_{\cf(2)}(F_{\cf(1)}(a))=0$.

For Case 8 it is enough to consider $a=x_1$ by Lemma \ref{cf2}. Again using the 
decoration in Figure \ref{3Darrows} we see that for all decompositions
$F_{\cf(2)}(F_{\cf(1)}(a))=0$.

For Case 9 it is enough to check $a=x_1$ by Lemma \ref{cf1}. Using the given decoration we 
have nontrivial terms from $U=\{1\}$ where $\cf(1)$ is split,
$\cf(2)$ is  type $E_{2,0}$,  and $U'=\{2,3\}$ where $\cf(1)'$ is  type
$E_{1,1}$, $\cf(2)'$ is join. 
We have 
$$F_{\cf(2)}(F_{\cf(1)}(x_1))=F_{\cf(2)'}(F_{\cf(1)'}(x_1))=y_1$$
so the terms cancel.

For Case 10 it is again enough to check $a=x_1$ by Lemma \ref{cf1}. Using the given decoration we 
have nontrivial terms from $U=\{1,3\}$ where $\cf(1)$ is type $E_{1,1}$,
$\cf(2)$ is join,  and $U'=\{2,3\}$ where $\cf(1)'$ is  type
$E_{1,1}$, $\cf(2)'$ is join. We again have 
$$F_{\cf(2)}(F_{\cf(1)}(x_1))=F_{\cf(2)'}(F_{\cf(1)'}(x_1))=y_1$$
so the terms cancel.
\end{proof}

\begin{lemma}\label{ki2}
For a fixed $(\cff,a,b)$ suppose that there exists a decoration $\pp$ on $\cff$
so that the coefficient of  $$\D_{1}(\pp)( \D_{k-1}(\pp)(a))$$ at $b$ is non-trivial.
Then Theorem \ref{ki} holds for $(\cff,a,b)$. 
\end{lemma}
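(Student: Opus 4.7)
The strategy is to exploit the invariance of the total coefficient under changes of decoration. By Lemma \ref{ind} together with the inductive hypothesis that Theorem \ref{dd} (and hence Theorem \ref{ki}) holds through dimension $k-1$, the coefficient of $\sum_{i=1}^{k-1} \D_{k-i}(\pp)(\D_i(\pp)(a))$ at $b$ is the same for every decoration $\pp$ on $\cff$. It therefore suffices to exhibit any single decoration $\pp^*$ under which this total coefficient vanishes.

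The hypothesis furnishes a decoration $\pp_0$ and an index $m$ such that some decomposition $\{1,\ldots,k\}=U_0\sqcup\{m\}$ with $|U_0|=k-1$ contributes non-trivially, i.e.\ $F_{\cf(2)}(F_{\cf(1)}(a))$ has non-zero coefficient at $b$, where $\cf(1)$ is the $(k-1)$-dimensional face indexed by $U_0$ and $\cf(2)$ is the 1-dimensional edge at crossing $m$. Since $F_{\cf(1)}\neq 0$, the face $\cf(1)$ must be of one of the types $A_{k-1}$, $B_{k-1}$, $C_{p,q}$, $D_{p,q}$, or $E_{p,q}$ with $p+q=k-1$. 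The edge $\cf(2)$ is either a split or a join (independent of orientation), which pins down the local geometry at crossing $m$ and forces $b$ to be obtained from the output of $F_{\cf(1)}(a)$ by a single elementary move, thus sharply constraining the admissible form of $b$.

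I would proceed by case analysis on the type of $\cf(1)$ together with the position of the arc $\gamma_m$ relative to it. In each case I would argue that one of the following holds: (a) the full configuration $\cff$ contains one of the local patterns $M_1,\ldots,M_9$ of Figure \ref{wr1}, so Theorem \ref{ki} follows from Lemma \ref{pro1}; (b) the triple $(\cff,a,b)$ satisfies the hypothesis of Lemma \ref{cf1}, \ref{cf2}, or \ref{prod}, yielding the conclusion directly; or (c) an explicit decoration $\pp^*$ can be chosen on $\cff$ under which the only non-trivial terms in the double sum come from the decomposition $U_0\sqcup\{m\}$ and its ``swapped'' companion $\{m\}\sqcup U_0$ (where the 1-dimensional edge is applied first), and these two contributions at $b$ are equal, hence cancel over $\Z_2$.

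The main obstacle I expect is the case in which $\cf(1)$ is of type $E_{p,q}$: such configurations have a distinguished central starting circle together with degree-$1$ satellite circles, and $\gamma_m$ may attach to either the central circle or to one or two satellites, in various relative positions. Each sub-case produces a distinct local picture in $\cff$, and verifying exhaustively that each reduces either to a Lemma \ref{pro1} pattern or to the swap-and-cancel argument of (c) is where the bulk of the bookkeeping lies. The other types ($A$, $B$, $C$, $D$) should be comparatively direct: $A_{k-1}$ and $B_{k-1}$ force $a$ to be $1$ or a product of all active starting circles, which immediately engages either Lemma \ref{cf1} or Lemma \ref{prod}, while $C_{p,q}$ and $D_{p,q}$ either produce an $M_i$ pattern or admit the cancellation in (c).
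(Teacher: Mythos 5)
Your proposal captures the paper's strategy correctly. You correctly identify the central observation that, by Lemma \ref{ind} and induction, the total coefficient of $\sum_{i} \D_{k-i}(\pp)(\D_i(\pp)(a))$ at $b$ is independent of the decoration, so it suffices to find one good decoration for which it vanishes; and you correctly note that the non-triviality hypothesis forces $\cf(1)$ to be one of the types $A_{k-1}$, $B_{k-1}$, $C_{p,q}$, $D_{p,q}$, $E_{p,q}$, after which the argument reduces via Lemma \ref{pro1}, Lemmas \ref{cf1}--\ref{prod}, rotation and trading, and duality to a finite explicit list, with $E_{p,q}$ being the bulkiest case. One small discrepancy: in your item (c) you anticipate that the surviving terms for the good decoration $\pp^*$ are exactly the pair $U_0\sqcup\{m\}$ and its swap, which then cancel; in the paper's handling of the cases that survive the reductions (the $C_{p,q}$ case (ii) and the $E_{p,q}$ cores of Figures \ref{C} and \ref{E}), the chosen $\pp'$ is designed so that \emph{every} decomposition contributes zero, with no cancellation needed. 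This is a point of technique rather than of substance, but if you tried to force the ``two-term cancellation'' shape you might run into trouble, since the non-trivial term you started with (for the original $\pp_0$) need not survive the change of decoration.
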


\begin{proof} First we use $\D_{1}(\pp)( \D_{k-1}(\pp)(a))\neq 0$ to get some kind of
structure result for $\cff$. In this family it is enough to consider
$\cff$  that doesn't contain $M_i$ in Figure \ref{wr2}, according to Lemma \ref{pro1}.
Using  Lemma \ref{dual2}-\ref{prod} 
further reduces the problem to a finite
list of undecorated $\cff$ configurations.  Then for the remaining cases
 it will be helpful to use different
$\pp'$ decorations on  $\cff$  to simplify the calculations. These steps are spelled out below.

According to Lemma \ref{ki1} it is enough to consider the case 
when $k\geq 4$. We can also assume that
$\cff$ is active and connected.  Let $\cf=(\cff,\pp)$. Clearly there is a decomposition $\cf=\cf(1)\ast \cf(2)$ and a monomial
$z\in V_1(\cf(1))=V_0(\cf(2))$ 
so that $\cf(1)$ is $k-1$ dimensional, and the coefficients of $F_{\cf(1)}(a)$ at $z$,
$F_{\cf(2)}(z)$ at $b$ are both nontrivial. 
Now the  active part of $\cf(1)$ is a $k-1$ dimensional configuration of
Type $A_{k-1}$, $B_{k-1}$, $C_{p,q}$, $D_{p,q}$ or $E_{p,q}$, where $p+q=k-1$.
The one dimensional configuration $\cf(2)$ is determined by an additional arc
 called $\delta$. 
For $\delta$ there are $3$ cases to consider
\begin{itemize}
\item $(i)$ $\delta$ joins two active ending circles of $\cf(1)$.
\item $(ii)$ $\delta$ joins an active ending circle of $\cf(1)$ to a new $w$ circle.
\item $(iii)$ $\delta$ splits one of the active ending circles of $\cf(1)$.
\end{itemize}

If $\cf (1)$ is of Type $A_{k-1}$:\\
For case $(i)$ $\cf(1)$ and $\cf$ have the same active starting circles. 
It follows that $a=1$, $z=1$, $b=1$.
 Since  $b ^\ast$ is the product of the active circles 
 we can use Lemma \ref{dual2} and  \ref{prod}. 
In case of $(ii)$ or $(iii)$, $\delta$ meets only one of the 
$k-1$  ending circles of $\cf(1)$, so $\cff$ contains 
    the $M_9$      
 configuration.

If $\cf(1)$ is of Type $B_{k-1}$:\\
Case $(ii)$  is covered by the second part of  Lemma \ref{cf1}. Cases $(i)$ and  $(iii)$
 follow from
Lemma \ref{prod}.

\begin{figure}
\mbox{\vbox{\epsfbox{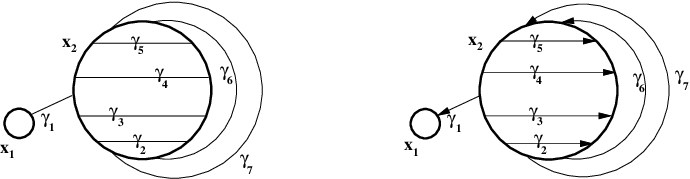}}}
\caption{}
\label{C}
\end{figure}

If  $\cf(1)$ is of Type $C_{p,q}$:\\
For case $(i)$ we only have to check for $a=1$, $b=1$. Since $b^\ast$ is the 
product of the ending circles of $\cff$, and $\cff$ has 
$p+q-2=k-1\geq 2$ ending circles,  
Lemma \ref{dual2} together with \ref{prod} implies that $(\cff,a,b)$ satisfies
Theorem \ref{ki}.

For case $(ii)$ and
 $p\geq 5$ or $p,q\geq 3$, the $\cff$ configuration contains $M_9$.
A  maximal example that we still has to consider is given by the left of Figure \ref{C}.
Modulo rotation and trading
 all the configurations without $M_9$ part can be obtained from this picture 
by deleting some  of
the $\gamma$ curves.  In particular these examples 
correspond to the index set $W\subset 
\{1,2,3,4,5,6,7\}$ with $|W|\geq 4$,
$1\in W$, $W\cap \{2,3,4,5\} \neq \emptyset$, $W\cap \{6,7\}\neq \emptyset$.  For such a $\cff (W)$
configuration use the
$\pp '$
 decoration inherited from the right side of Figure \ref{C}. 
According to Lemma \ref{cf1}
it is enough to consider $a=x_1$. Now $F_{\cf(1)}(x_1)\neq 0$ implies
that $|U|=1$ or $U=\{1,2\}$ or $U=\{1,3\}$, and in each of these  cases
$|W|\geq 4$ implies that $F_{\cf(2)}=0$.
It follows 
that
$F_{\cf(2)}(F_{\cf(1)}(x_1))=0$ for all decompositions of $(\cf(W),\pp')$.

In  Case $(iii)$ if $\delta$ is parallel to the other
$\gamma$ circles then $\cff$ is an undecorated 
$C_{p+1,q}$ and contains $M_9$. Finally if 
 $\delta$ is not parallel we can use the trading 
operation of Lemma \ref{trad2} to reduce to case $(ii)$.

If $\cf(1)$ is of Type $D_{p,q}$:\\
Case $(ii)$ follows from the second part of Lemma \ref{cf1}. Cases $(i)$ and $(iii)$ follow from Lemma \ref{prod}.


\begin{figure}
\mbox{\vbox{\epsfbox{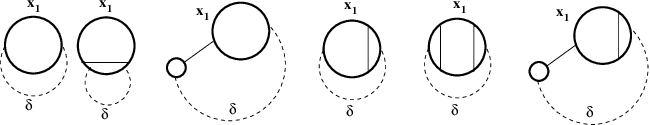}}}
\caption{ The core of the  $E\cup \delta$ configurations.}
\label{E1}
\end{figure}

\begin{figure}
\mbox{\vbox{\epsfbox{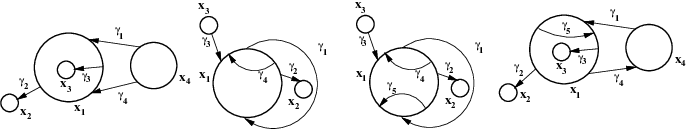}}}
\caption{ Possible $E$ configurations.}
\label{E}
\end{figure}

If $\cf(1)$ is of Type $E_{p,q}$:\\
The central starting circle of ${\cf(1)}$ gives a starting circle for $\cff$ that we denote by $x_1$.
If $\delta$ connects $x_i$ to $w$,
$x_i$ to $x_i$ or $x_i$ to $x_j$  with $i,j\geq 2$ then up to rotation
we get $M_4$, $M_5$ or $M_8$ respectively. 

In the remaining cases we argue as follows:
Since $\cf(1)$ is of Type $E$, it contains $k-1$ local configurations
 that could be traded as in  Figure \ref{trading}. Of course $\delta$ could 
intersect some of them. In fact the  endpoints of $\delta$ 
could intersect  in a  degree one  $x_i$ circle, or a 
 degree one dual $y_i$ circle.
 By deleting all
the local
configurations that are disjoint from $\delta$
 we get the core of $\cf$ see  Figure \ref{E1}. 
Each picture  contains at most two segments of $x_1$ where
the deleted configurations might have been  attached.  
However two such (disjoint from $\delta$) configurations on the same segment
  would form a local configuration of type $M_1$, $M_2$ or $M_3$ 
(up to  rotations). The first $2$ cores give at most $3$-dimensional examples.
Using rotation and trading to simplify the list,
we represent the examples
from the other $4$ cores in   Figure \ref{E}. By forgetting the decoration
for a moment, each picture on the right side of Figure \ref{E} shows three $\cff$
 configurations: the maximal $5$-dimensional, and the two 4-dimensionals
obtained by deleting $(\gamma _2,x_2)$ or $(\gamma _3,x_3)$.
Using Lemma \ref{cf1} it is enough to consider the case where
$a$ is the dual of $x_1$. Now each of these eight configurations
have  a preferred $\pp'$ decoration induced from
Figure \ref{E}. It is easy to check that for the given decorations 
 $F_{\cf(2)}(F_{\cf(1)}(a))=0$ for all decompositions.
\end{proof}

\begin{lemma}\label{ki3}
For a fixed $(\cff,a,b)$
suppose that there exists a decoration $\pp$ on $\cff$
so that the coefficient of  $$\D_{k-1}(\pp)(\D_1 (\pp)(a))$$ at $b$ is nontrivial.
Then Theorem \ref{ki} holds for $(\cff,a,b)$.
\end{lemma}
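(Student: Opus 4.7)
The plan is to deduce Lemma \ref{ki3} from Lemma \ref{ki2} via the duality symmetry $\cff \longrightarrow m(\cff^\ast)$ already exploited throughout the section (cf.\ Lemma \ref{dual2}). The main observation is that the operation $\cf \mapsto m(\cf^\ast)$ reverses the order of composition: a decomposition $\cf = \cf(1) \ast \cf(2)$ with $\dim \cf(1) = i$ and $\dim \cf(2) = k-i$ corresponds bijectively to a decomposition $m(\cf^\ast) = m(\cf(2)^\ast) \ast m(\cf(1)^\ast)$ of dimensions $k-i$ and $i$, in that order. Combining this with the duality rule (Definition \ref{duality}) applied term by term, the coefficient of $F_{\cf(2)}(F_{\cf(1)}(a))$ at $b$ equals the coefficient of $F_{m(\cf(1)^\ast)}(F_{m(\cf(2)^\ast)}(b^\ast))$ at $a^\ast$.

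Using this, I would first translate the hypothesis of Lemma \ref{ki3}. Let $\pp^\natural$ denote the decoration on $m(\cff^\ast)$ induced from $\pp$ on $\cff$ (obtained by dualizing each oriented arc as in the definition of $\cf^\ast$ and then mirroring). The identity above, applied with $i=1$, shows that the coefficient of $\D_{k-1}(\pp)(\D_1(\pp)(a))$ at $b$ equals the coefficient of $\D_1(\pp^\natural)(\D_{k-1}(\pp^\natural)(b^\ast))$ at $a^\ast$. Hence the non-triviality assumed in the statement of Lemma \ref{ki3} for $(\cff, a, b, \pp)$ is exactly the non-triviality hypothesis of Lemma \ref{ki2} for the triple $(m(\cff^\ast), b^\ast, a^\ast)$ with decoration $\pp^\natural$.

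Applying Lemma \ref{ki2} therefore yields Theorem \ref{ki} for the triple $(m(\cff^\ast), b^\ast, a^\ast)$. Finally, Lemma \ref{dual2} transports this conclusion back to $(\cff, a, b)$, completing the proof. The only point that needs care is verifying that the correspondence $\pp \longleftrightarrow \pp^\natural$ between decorations on $\cff$ and $m(\cff^\ast)$ is a bijection, so that the existential ``there exists a decoration'' in the conclusion of Lemma \ref{ki2} produces a valid decoration on $\cff$ after we dualize back; this follows immediately from the involutive nature of $\cf \mapsto (\cf^\ast)^\ast = r(\cf)$ together with the freedom to reverse arcs, which does not affect $F_\cf$ by Definition \ref{conjugation}. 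No genuinely new combinatorial casework is required, and the entire argument is just a formal dualization of the proof of Lemma \ref{ki2}.
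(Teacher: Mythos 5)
Your proposal is correct and matches the paper's approach exactly: the paper's entire proof is the single sentence ``This follows from Lemma \ref{ki2} and Lemma \ref{dual2}.'' You have simply unpacked why that works, by observing that $\cf\mapsto m(\cf^\ast)$ reverses the order of composition so that the $\D_{k-1}\circ\D_1$ hypothesis for $(\cff,a,b)$ translates into the $\D_1\circ\D_{k-1}$ hypothesis of Lemma \ref{ki2} for $(m(\cff^\ast),b^\ast,a^\ast)$, and then Lemma \ref{dual2} transports the conclusion back. The extra remark about the bijection of decorations is harmless but not strictly needed once Lemma \ref{dual2} is invoked.
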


\begin{proof}
This follows from Lemma \ref{ki2} and Lemma \ref{dual2}.
\end{proof}


\begin{lemma}\label{ki4}
Suppose that there exists a decoration $\pp$ on $\cff$
so that the coefficicent of $$\D_{i}(\pp)(\D_j (\pp)(a))$$
at $b$ is non-trivial,
where $i+j=k$, $i\geq 2$, and $j\geq 2$.
Then Theorem \ref{ki} holds for $(\cff,a,b)$.
\end{lemma}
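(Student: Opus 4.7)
The plan is to mirror the strategy of Lemma \ref{ki2}. I would start by fixing a decoration $\pp$ and a decomposition $(\cff,\pp) = \cf(1)\ast \cf(2)$ with $\dim \cf(1) = j\geq 2$, $\dim \cf(2) = i\geq 2$, together with an intermediate monomial $z$ realizing the nontrivial coefficient: $z$ appears with nonzero coefficient in $F_{\cf(1)}(a)$, and $F_{\cf(2)}(z)$ has nonzero coefficient at $b$. The active parts of $\cf(1)$ and $\cf(2)$ are then each of Type $A$, $B$, $C$, $D$, or $E$ from Section \ref{higher}, and the arrangement of the arcs $\gamma_1,\dots,\gamma_k$ is tightly constrained.

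Before case analysis I would invoke Lemma \ref{dual2} to work modulo the symmetry $\cff \leftrightarrow m(\cff^\ast)$ (which interchanges the roles of $\cf(1)$ and $\cf(2)$), and, crucially, I would assume that Lemmas \ref{ki2} and \ref{ki3} do not apply to $(\cff,a,b)$ — otherwise we are done. That standing hypothesis means no decoration yields a nontrivial $1\ast(k-1)$ or $(k-1)\ast 1$ composition from $a$ to $b$, which rigidifies $\cff$ substantially. Then I would run through the reduction toolbox already used in Lemma \ref{ki2}: Lemma \ref{pro1} eliminates any $\cff$ containing a local piece $M_1,\ldots,M_9$; Lemmas \ref{cf1} and \ref{cf2} handle degree one and degree two starting circles against the monomial $a$; Lemma \ref{prod} handles $a=\prod x_i$; and Lemmas \ref{rot1} and \ref{trad2} identify configurations up to rotation and trading moves.

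What survives after the reductions is a short residual list indexed by the type pair $(\cf(1),\cf(2))$ together with the intermediate monomial $z$. For each residual case I would exhibit a concrete decoration $\pp'$ on $\cff$ and verify, by direct enumeration of all decompositions, that
\[
\sum _{l=1} ^{k-1} \D _{k-l}(\pp')\bigl( \D _{l}(\pp')(a)\bigr)
\]
has zero coefficient at $b$. The vanishing will typically come either from cancellation in pairs of decompositions whose type rules yield equal contributions (by the extension and naturality rules), or from a single decoration choice that annihilates every decomposition via the Type $A$--$E$ rules.

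The main obstacle will be the combinatorics: with five active types on each side and several admissible intermediate monomials $z$, the raw list of combinations is substantially larger than in Lemma \ref{ki2}. I expect the hardest cases to be those where both $\cf(1)$ and $\cf(2)$ are of Type $E$, since the interactions between central starting circles, degree one starting circles, dual degree one circles, and the intermediate monomial can be intricate; here I anticipate needing explicit ``core'' pictures analogous to Figure \ref{E1}, followed by a short table of decorations analogous to Figure \ref{E}, to pin down the remaining configurations and finish the verification.
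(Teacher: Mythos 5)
Your proposal matches the paper's proof in both overall structure and the specific reduction toolbox: the paper partitions by the type pair $Q = (\mathrm{Type}(\cf(1)),\mathrm{Type}(\cf(2)))\in\{A,B,C,D,E\}^2$, first observes that $Q\in\{A,C\}\times\{B,D\}$ and $Q\in\{B,D\}\times\{A,C\}$ force disconnectedness, then filters the rest via Lemmas~\ref{dual2}--\ref{prod}, and for each residual $Q$ draws the cores and exhibits explicit decorations (Figures~\ref{N}, \ref{EA}, \ref{EDcores}, \ref{R}, \ref{R2}) under which all decompositions vanish or cancel. Your standing hypothesis that Lemmas~\ref{ki2} and~\ref{ki3} fail for $(\cff,a,b)$ is a legitimate streamlining the paper does not bother to invoke, and one forecast is off: the paper finds $(E,D)$ the most laborious, whereas $(E,E)$ collapses quickly (via the count $r$ of circles that are simultaneously active ending circles of $\cf(1)$ and active starting circles of $\cf(2)$) either to an $M_i$ pattern or to configurations already settled in the $(E,D)$ analysis.
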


\begin{figure}
\mbox{\vbox{\epsfbox{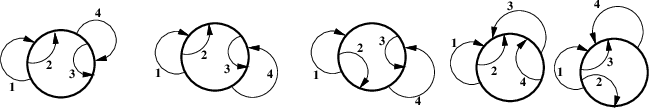}}}
\caption{}
\label{N}
\end{figure}

\begin{figure}
\mbox{\vbox{\epsfbox{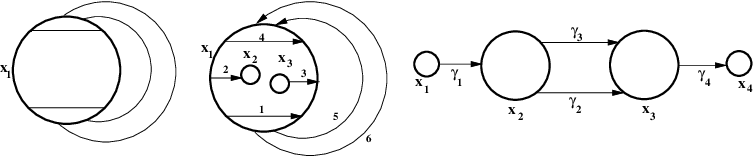}}}
\caption{}
\label{EA}
\end{figure}

\begin{proof}
If $\D_{i}(\pp) (\D_j (\pp)(a))$
at $b$ is non-trivial, then there exists a decomposition of $\cf=(\cff, \pp)$ as $\cf=\cf(1)\ast \cf(2)$, and a monomial $z\in V_1(\cf(1))
=V_0(\cf(2))$, so that
the coefficient of $F_{\cf(1)}(a)$ at $z$ is non-trivial, and $F_{\cf(2)}(z)$ at $b$ is non-trivial. By recording the types of $\cf(1)$ and $\cf(2)$ we get an 
element $Q$ in
$\{A,B,C,D,E\} ^2$:
$$Q=({\rm Type\ of} \ \cf(1), {\rm Type\ of}\ \cf(2) ).$$ 
In the special case when $\cf(1)$ or $\cf(2)$ is the $2$-dimensional configuration Type 8, we
use the extra information on $p$ and the the remark at the end of Section
\ref{higher}, to assign the value $C$ or $D$.

It follows from the definition of the $F$ map that $Q\in \{A,C\}\times \{B,D\}$ and $Q\in \{B,D\}\times \{A,C\}$ 
is not possible unless $\cf$
 is disconnected (and disconnected configuations are already discussed.)

In  case  $Q\in \{B,D\} \times \{B,D\}$
 the monomial $a$ is the product of the starting circles. According to Lemma \ref{prod} it is enough to consider the cases where $\cff$ has only one starting circle. This still leaves the possibility where both $\cf(1)$ and $\cf(2)$ is of Type  $D_{1,1}$.
 This gives  $5$ undecorated,  $\cff$ configurations. By assigning  convenient $\pp '$
decorations, we get the list of $\cf '=(\cff, \pp ')$ in
Figure \ref{N}. In all the five configurations   $F_{\cf'(1)}\neq 0$ implies $|U|=1$. Furthermore for  these choices of $U$ we have
$F_{\cf'(2)}=0$.


\begin{figure}
\mbox{\vbox{\epsfbox{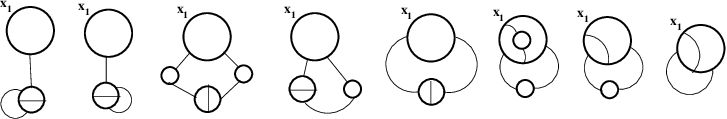}}}
\caption{}
\label{EDcores}
\end{figure}

 The case $Q\in \{A,C\}\times \{A,C\}$ follows from the previous $Q\in \{B,D\} \times \{B,D\}$ case and
Lemma \ref{dual2}.

The remaining cases all contain an $E$ factor. Similarly to the proof of case  $E$ in  Lemma \ref{ki2} we determine the
core of $\cff$, and  use  the trading and rotation operations to shorten the list of configurations to check.
(It follows from the proof of Lemma
\ref{trad2} that rotation or trading doesnt' change $Q$.)

{\bf The case  Q=(E,A) :} Let $y_1,...,y_s$ denote the active ending circles of $\cf(1)$.
Since $\cf(1)$ is Type $E$, $F_{\cf(1)}(a)$ is divisible by the central ending circle $y_1$. It follows that 
$y_1$ is a passive starting circle for $\cf(2)$. In particular
the $\gamma$ arcs of $\cf(2)$ are parallel and connect either 
$y_i$ to a new circle $w$
or $y_i$ to $y_j$, where $i,j \geq 2$. 
If the dimension of  $\cf(2)$ is greater than $2$
 we get the local configuration
 $M_8$.  

It remains to consider the case
where  $\cf(2)$ has dimension $2$. If  $w$ is an active starting circle in $\cf(2)$ then 
$w$ is a degree $2$ circle in $\cff$ and by Lemma \ref{cf2} it is enough to consider the case where $a$ is divisible
by $w$. However then $w$ divides $z$ as well, 
and since $\cf(2)$ is type $A$, we have $F_{\cf(2)}(z)=0$.
  If $y_i$ and $y_j$ are the active starting circles of $\cf(2)$ then the core of $\cff$ is given in the left of
Figure \ref{EA}. The correspoding configurations (modulo rotation and trading)
are represented in the center of Figure \ref{EA}. 
The picture represents four configurations, the maximal  $6$-dimensional
and the subconfigurations where either $(\gamma _2, x_2)$ or $(\gamma _3,x_3)$ or both are deleted.
By Lemma \ref{cf1} is enough to consider $a=x_1^\ast$. For the given decoration 
$F_{\cf'(1)}(a)\neq 0$ implies that $|U|=1$ or $U=\{3,4\}$ or $U=\{1,2\}$. With these choices of $U$ we have
$F_{\cf'(2)}=0$.

{\bf The case  Q=(A,E) :} In this case $\cf(1)$ has $s$ active ending circles $y_1,...,y_s$ where $s$ is the dimension of $\cf(1)$. 
Furthermore none of these $y_i$  divide $z$.  It follows that exactly one of these circles, say $y_1$ is the 
central starting circle of $\cf(2)$ and the others are passive starting circles for 
$\cf(2)$. So 
if $\cff$ doesn't contain $M_8$, then  $\cf(1)$ is $2$ dimensional. 
Using again the core idea, rotation and trading, we get one 
remaining $\cff$ configuration to check, see $(\cff, \pp')$  in the
right side of
Figure \ref{EA}. We need to consider $a=x_1x_4$. This gives $|U|=1$ or $U=\{2,3\}$. For these $U$ we again have
$F_{\cf(2)'}=0$.

{\bf The case  Q=(E,C):} In this case the active
starting circle of $\cf(2)$ agrees with one of the degree 
$1$ ending cirles of $\cf(1)$. It follows that if  the dimension of $\cf(1)$ is greater than $2$, then $\cff$ contains an $M_1$, $M_2$ or $M_3$ configuration
after rotations. Furthermore if $\cf(1)$ is two-dimensional, then $\cff$ 
contains an
 $M_6$ or
$M_7$ after rotations.

\begin{figure}
\mbox{\vbox{\epsfbox{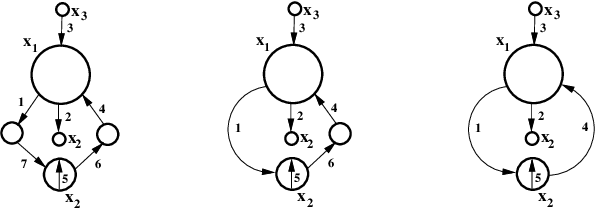}}}
\caption{}
\label{R}
\end{figure}

{\bf The case  Q=(E,D) :}
 Clearly $a$ is divisible by all the starting circles, except for $x_1$,
the central starting circle of $\cf(1)$. Since $\cf(2)$ is Type $D_{p,q}$, one of its
active starting circle is the central ending circle of $\cf(1)$, the other
$p+q-2$ active starting circles of $\cf(2)$ have to be passive ending circles
for $\cf(1)$. 
Clearly if $p$ or $q$ is large, we get an $M_8$ configuration in $\cff$.
Further simplifying with the second part of Lemma \ref{cf2} we get the  cores as in Figure \ref{EDcores}.
The first two cores give configurations that contain $M_4$ or $M_5$ 
up to rotation.  The next three cores gives
 the examples presented in 
Figure \ref{R}. The last three cores give Figure \ref{R2}. 

Note that each of the six pictures represent several $(\cff,\pp ')$
configurations. These are obtained
from the maximal configurations by erasing a
subset of the degree $1$ local configurations. (The only constraint is that
the remaining index set $W$ satisfies $|W|\geq 4$). By Lemma \ref{cf1} it is 
enough to consider $a=x_1^\ast$. 

When checking $C'(W)$ configurations in
Figure \ref{R}, $F_{\cf'(1)}(a)\neq 0$ implies that either
$|U|=1$,  $U=\{1,2\}$ or  $U=\{3,4\}$. In all these cases $F_{\cf'(2)}=0$.

When checking $C'(W)$ configurations in
Figure \ref{R2}: For the left family
 we get $|U|=1$, $U\subset  \{1,3,4,6\}$ or $U\subset \{2,5,7,8\}$. 
The middle family gives $|U|=1$, $U\subset\{3,4,6\}$ or
$U\subset \{2,5,7\}$. For the  family on the right side
we get $|U|=1$, $U=\{5,6\}$ or $U=\{3,6\}$. For all but four of these $(W,U)$ choices  
 $F_{\cf' (2)}=0$. The remaining cases correspond to  
the family on the right side of Figure \ref{R2}, with $W=\{1,2,5,6\}$ and 
$W=\{1,3,4,6\}$. For $W=\{1,2,5,6\}$  we get nontrivial 
terms with $U=\{1\}$, $U'=\{5,6\}$, both mapping $x_1 ^\ast$ to $y_1$.
For $W=\{1,3,4,6\}$ we get $U=1$, $U'=\{3,4\}$, and again both decompositions map
$x_1^\ast$ to $y_1$.

\begin{figure}
\mbox{\vbox{\epsfbox{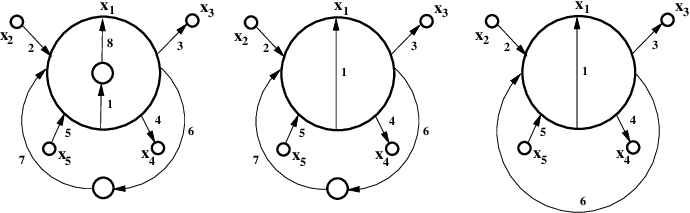}}}
\caption{}
\label{R2}
\end{figure}

The cases $Q=(B,E)$, $(E,B)$, $(E,C)$ and $(C,E)$ follow from the previous 
four cases by duality.

Finally if $Q=(E,E)$ let $r$ denote the number of circles that are both active
ending circles of $\cf(1)$ and active starting circles of $\cf(2)$. Since we are dealing with connected configurations 
we have $r\geq 1$. Let  $w$ be such a circle. If $w$ divides the monomial $z$ then  $w$ has to be the central
ending circle
of $\cf(1)$ and a degree $1$ starting circle of $\cf(2)$. If $w$ doesn't divide $z$ then $w$ has to be a degree $1$ ending
circle of $\cf(1)$ and the central starting circle of $\cf(2)$. It follows that
$r=1$ or $r=2$. When $r=1$
 $\cff$ contains at least one of the $M_1$, $M_2$, $M_3$, $M_4$ or $M_5$ configurations 
after rotations. When $r=2$ after trading we get the configurations in the middle and in the right of Figure 
\ref{R2}, that were dealt earlier.
\end{proof}

Now Theorem \ref{ki} follows immediately from Lemma \ref{ki2}, \ref{ki3} and \ref{ki4}.

\section{The spectral sequence.}

Let $L$ be an oriented link. Given a projection
$\DD$ of $L$, let $n^+(\DD)$ and $n^-(\DD)$ 
denote the number of positive and negative
crossings respectively.

Let's recall the bigrading from Khovanov homology, see \cite{Kh}, 
\cite{Ras}.
Given $I\in \{0,1\}^n$, and a monomial $z\in V(I)$, 
the homological grading is 
$$h(z)=|I|-n^-(\DD)$$
the $q$ grading
$$q(z)=gr(z)+|I|+n^+(\DD)-2n^-(\DD).$$

There is also a $\delta$ grading, see \cite{Ras2}, given by
$$\delta(z)=q(z)-2h(z)=gr(z)-|I|+n^+(\DD).$$

Note that in Khovanov boundary map $\D _1$ 
shifts the $(q,h)$ grading by $(0,1)$, and gives a bigraded homology theory.

According to the grading rule, see Lemma \ref{grading}
 the map $\D_k$ shifts the $(q,h)$ gradings 
by $(2k-2,k)$, and decreases the $\delta$ grading by $2$. 

\begin{defn}
Given a diagram $\DD $ and a decoration $\pp$, let ${\widehat C}
(\DD,\pp)$ denote the chain complex
$(C_{\DD}, \D(\pp))$, and let ${\widehat H}(\DD,\pp)$ denote its homology.
\end{defn}

Accorting to the grading shifts $\delta$ gives a well-defined grading on 
${\widehat H}(\DD, \pp)$ and we get a decomposition
$${\widehat H}(\DD, \pp)=\oplus _i {\widehat H}_i(\DD,\pp)$$

Given $i\in {\bf Z}$, let $C_i \subset {\widehat C} (\DD,\pp)$ be generated by pairs
$(I,z)$, where $z$ is a monomial in $V(I)$ with
$h(z)\geq i$.

\begin{figure}
\mbox{\vbox{\epsfbox{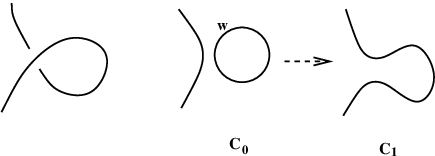}}}
\caption{}
\label{Pos}
\end{figure}

Then $C_i$ is a subcomplex of ${\widehat C} (\DD,\pp)$. The corresponding filtration by $C_i$, $i\in {\bf Z}$
induces a spectral sequence. Since the first non-trivial differential $\D(1)$ is the Khovanov differential
we get a spectral sequence starting
at the ${\Z}_2$ Khovanov homology of $L$ and ending at ${\widehat H}(\DD ,\pp)$

\begin{thm}\label{Inv}
Given an oriented link $L$, let $\DD _1$, $\DD _2$ be two diagrams representing $L$, and let $\pp_1$, $\pp _2$ be decorations for
$\DD _1$ and $\DD  _2$ respectively. Then there is a grading preserving isomorpism between 
${\widehat H}(\DD  _1,\pp _1)$ and
${\widehat H}(\DD  _2,\pp _2)$. Furthermore the two spectral sequences from
 Khovanov homology to ${\widehat H}$ are also isomorphic.
\end{thm}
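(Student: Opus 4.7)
The plan is to decompose the statement into two invariance problems: invariance under change of decoration on a fixed diagram, and invariance under Reidemeister moves relating different diagrams. Since any two decorations on $\DD$ differ at finitely many crossings, it suffices to treat the case where $\pp$ and $\pp'$ differ at a single crossing $m$; and since any two diagrams of an oriented link are related by a finite sequence of Reidemeister moves and planar isotopies, it suffices to check each move type separately.

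For the change of decoration at the $m$-th crossing, I would show that $\Phi_m := \Id + H_m$ is a filtered chain map from $\widehat{C}(\DD,\pp)$ to $\widehat{C}(\DD,\pp')$, and that it is an isomorphism since $H_m\circ H_m = 0$ makes it self-inverse. The chain map identity $\D(\pp')\circ \Phi_m = \Phi_m\circ \D(\pp)$ follows directly from Theorem \ref{Relation} together with the identities $H_m\circ H_m = 0$ and $H_m\circ \D(\pp)\circ H_m = 0$, both of which were already exploited in the proof of Lemma \ref{ind}. Since $H_m$ strictly raises the homological grading $h$, the map $\Phi_m$ preserves the filtration by the subcomplexes $C_i$, reduces to the identity on the associated graded, and is therefore the identity on the $E_1$-page (Khovanov homology). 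A standard comparison theorem for filtered complexes then yields isomorphisms on every subsequent page of the spectral sequence and on the total homology $\widehat{H}$. Grading preservation on $\widehat{H}$ follows because $\Phi_m$ acts as the identity on the Khovanov page, and the $\delta$-grading on $\widehat{H}$ is inherited from there.

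For Reidemeister invariance, I would adapt the Gaussian elimination (cancellation) technique used in the classical proof of Khovanov invariance, extending it to complexes with higher differentials. For R1, after fixing a convenient decoration at the new crossing, a Khovanov edge in the hypercube of the kinked diagram becomes an isomorphism onto a direct summand; a cancellation argument then produces a filtered chain homotopy equivalence between $\widehat{C}$ of the kinked diagram and $\widehat{C}$ of the unkinked one. For R2 the two new crossings give a pair of compatible cancellations. For R3 the standard ``bridge'' chain homotopy relating the two sides of the move must be promoted to one commuting with all the $\D_k$. In each case the Extension Formula and the Disconnected Rule restrict the analysis to the active part of each configuration, so configurations whose active part is disjoint from the move region give matching contributions on both sides automatically.

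The principal obstacle will be R3: the move meets many $k$-dimensional faces of the hypercube, and verifying that the bridge chain homotopy intertwines the higher differentials will require a case analysis in the spirit of Lemmas \ref{ki2}, \ref{ki3} and \ref{ki4}, combined with careful choices of decoration on the crossings in the R3 triangle so that most higher $\D_k$ contributions near the move region simplify or vanish by the Disconnected Rule. Once filtered chain homotopy equivalences are in place for each Reidemeister move, the grading-preserving isomorphism of $\widehat{H}$ and the isomorphism of the two spectral sequences follow formally, since a filtered chain homotopy equivalence induces an isomorphism on the whole spectral sequence and its abutment, compatibly with the $\delta$-grading that it preserves at the level of complexes.
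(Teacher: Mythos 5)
Your proposal follows essentially the same two-step strategy as the paper: decoration-change invariance via the filtered chain map $\Id + H_m$ (which is exactly the map $G$ the paper uses, with the chain-map identity coming from Theorem \ref{Relation} and $H_m^2 = 0$), and Reidemeister invariance via Gaussian elimination with carefully chosen decorations so that the relevant higher differentials vanish. The only cosmetic difference is in R3, where you phrase the argument as promoting a bridge chain homotopy, while the paper instead cancels acyclic pieces on each side until both reduce to the common complex of Figure \ref{Simplified}; the underlying mechanism (choosing the decoration of Figure \ref{Re3b} so that higher-dimensional contributions involving the small circle $w$ vanish by the filtration rule and Type-$E$ considerations) is the same.
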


\begin{figure}
\mbox{\vbox{\epsfbox{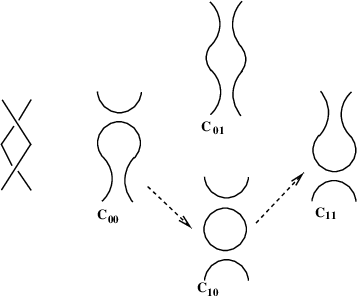}}}
\caption{}
\label{Re2}
\end{figure}

\begin{proof}
We start with a given diagram and two different decorations. Clearly it is enough to consider the case where $\pp$ and $\pp '$ differ only 
at one of the crossing, say the $m$-th crossing.  Then  we claim that 
$$G: {\widehat C} (\DD, \pp)\longrightarrow {\widehat C}(\DD,\pp ')$$
 given by $G( x)= x+H_m(x)$ is a filtered ismorphism between the chain complexes.  We have 
$$ G (\D (\pp)(x))= \D(\pp) (x)+ H_m (\D(\pp)(x))$$ and

$$\D(\pp')(x+H_m(x))=
(\D(\pp)+H_m\cdot \D(\pp) +\D(\pp)\cdot H_m )(x+ H_m((x))$$
according to Theorem \ref{Relation}. 
Since $H_m \cdot \D(\pp)\cdot  H_m=0$ and $H_m\cdot H_m=0$ we get
that $G$ is is chain map. Since $G$ is defined as the  identity plus lower order, it follows that
both the kernel and cokernel of $G$ is trivial.

The next step is to look at the Reidemeister moves. This works the same way as in Khovanov homology.
However
 for the third Reidemeister move we have to pay attention to the decorations as well.

For the first Reidemeister move with positive stabilization
we get the picture in  Figure \ref{Pos}. Then $C_0$ is a direct sum of  $C_0 ^+$ and $C_0 ^-$. Here
$C_0^-$ is generated  
by $z$ that are divisible with $w$, and $C_0^+$ is generated by $z$ that are not divisible with $w$. 
Note  that the higher differentials between $C_0^+$ and $C_1$ are trivial: Any configuration 
$\cff$ that contains the $\gamma _1$ arc will have $w$ as a degree 1 starting circle. However then
$\cff$ needs to be a Type $E$ configuration. Since $z$ is not divisible by $w$ it follows that
$F_{\cf}(z)=0$. However the Khovanov differential $\DD _1$ gives an isomorphims between $C_0^+$ and 
$C_1$. Cancelling this subcomplex with trivial homology, we  get the  $C_0 ^-$ is a quotient complex, which
is isomorphic to  the chain complex before stabilization. Since the isomorphism preserves the filtration we 
also get the same spectral sequence.

For the second Reidemeister move we again have the usual argument in Khovanov homology, see Figure \ref{Re2}.
Here we first cancel with the subcomplex spanned by $C_{10}^+ \oplus C_{11}$. In the simplified complex
we cancel with  the quotient complex $C_{00}\oplus C_{10}^-$. The remaining $C_{01}$ is isomorphic to the
original complex.

%

\begin{figure}
\mbox{\vbox{\epsfbox{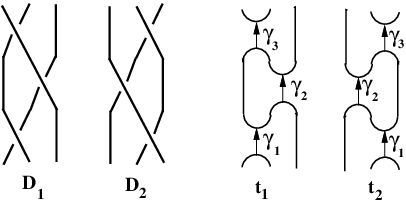}}}
\caption{}
\label{Re3b}
\end{figure}

\begin{figure}
\mbox{\vbox{\epsfbox{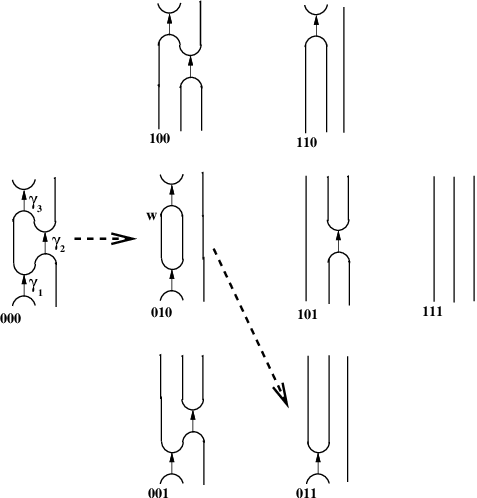}}}
\caption{}
\label{Re3}
\end{figure}

\begin{figure}
\mbox{\vbox{\epsfbox{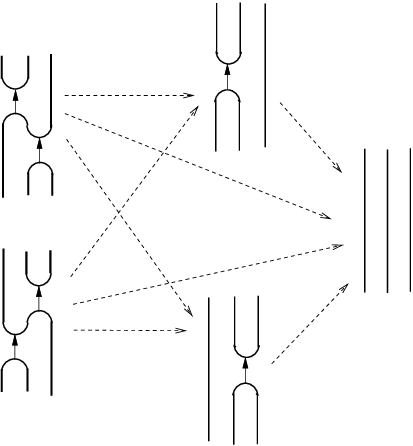}}}
\caption{}
\label{Simplified}
\end{figure}

For  the third Reidemeister move  we use the decorations as in Figure \ref{Re3b}.
Using the usual argument, see Figure \ref{Re3} we first reduce the chain complex by 
cancelling the quotient complex $C_{000} \oplus C_{010}^-$.
 Then we reduce further by contracting with the edge map that connect
$C_{010}^+ $ and $C_{011}$. 

We have to determine the resulting complex.
The first observation is that the complexes $C_{011}$ and $C_{110}$ are isomorpic. 

When
considering the restriction of the boundary map  from $C_{010} ^+$ to $C_{110}$ and 
$C_{010}^+$ to $C_{001}$ the one-dimensional configurations  induce an isomorphism. 
The higher dimensional configurations contribute trivially:  such a $\cf$ configuration
would contain $w$ as a degree one circle, so it has to be Type $E$, but $a\in C_{010}^+$
is not divisible with $w$.

Finally we claim that the map from $C_{010}^+$ to $C_{111}$ is trivial:
A configuration $\cf$ that connects them would contain $w$ as an active starting circle, 
and both $\gamma _2$ and $\gamma _3$. Because of the decoration it is enough to consider the case where.
 $\cf$ is Type $B$ or $D$. However  $a$ not divisible by $w$ implies that $F_{\cf}(a)=0$.

It follows that the simplified chain complex is identical to the chain complex in Figure \ref{Simplified}. Note that here
 the maps are still given 
by the usual decorated configurations, where we use small upwards or downwards
 isotopies supported in the region to identify the circles.  
A similar argument shows that ${\widehat C}(\DD _2,\pp _2)$ can be simplified to the same complex.

\end{proof}

\section{Further construction}

First note that there is an alternating construction $\D '$ for the boundary map, where $F'_{\cf}$ is defined by
$$F'_{\cf}= F_{m(\cf)}.$$
So for example in this alternate world the 2 dimensional Type $8$ 
configuration would contribute trivially, and for a Type $16$ 
configuration $\cf$ 
we would have $F'_{\cf}(1)=1$ and $F'_{\cf}(x_1x_2)=y_1y_2$. Clearly the construction carries through with $\D '$ in place of $\D$ and gives a twin 
version ${\widehat H}'(L)$ of ${\widehat H}(L)$.

For braids one could simplify the choice of decorations as follows. First draw the braid diagram so that the strands are moving in the 
vertical direction. Then at each $0$ resolution the $\gamma$ arc is either vertical or horizontal. Now orient the arcs so that vertical arcs are oriented upward, and the horizontal arcs are oriented to the right.

For the transverse element in ${\widehat H}(L)$ we follow the construction in \cite{Plam},
see also \cite{Baldwin}: Given an $n$-braid we choose the
unique resolution that gives back the $n$-parallel circles. For this resolution we choose the monomial $z$ that is the product of 
all the $n$ circles. Now using the braid decoration as above we see that $z$ is a closed element in ${\widehat C}(\DD, \pp)$: The 
choice of $z$ implies that we only have to check Type $B$ or Type $D$ configurations. The geometry of the resolution shows that the only possibility is Type $B_2$. However all the oriented arcs are pointed to the right, so there are no Type $B_2$ configurations that emanate from this resolution.
Now we have to study braid moves as in \cite{Plam}, and see that the distinguished element maps to each other. 
However that is straightforward from the discussion of
Section 6, where we view the second and the third moves as supported inside a braid. (Note that the braid decoration guarantees that 
both $\DD_1$ and $\DD_2$ are decorated as in Figure \ref{Re3b}.)

For a link with a distinguished component, we can define a reduced version ${\widehat H}_{red}(L)$. This is given by the usual point
 filtration in Khovanov homology. Take a point $P$ in the diagram that lies in the distinguished component. Then for each resolution 
we get a special circle $x(P)$ that contains $P$. Now define
let 
${\widehat C}(\DD,\pp, P)$ be generated by those monomials that are divisible by $x(P)$. According to the Filtration rule of Section 2,
we see that ${\widehat C}(\DD,\pp, P)$ is a subcomplex of ${\widehat C}(\DD, \pp)$. 

Since the edge homotopies  $H_m$ map the subcomplex to itself, the proof 
of invariance in  Theorem \ref{Inv} carries through for the reduced theory as well, and we get 
${\widehat H}_{red}(L)$. Note that here the $q$ and $\delta$  gradings are slightly modified, 
in particular both of them are shifted up by 1, so that  the reduced Khovanov homology
and the reduced ${\widehat H}$ of the unknot $U$ are supported in $\delta$ grading $0$.

Computations for ${\widehat H}(K)$  and ${\widehat H} _{red}(K)$ are given 
in \cite{Seed} for large families of knots. A particularly interesting case
is the $(3,5)$ torus knot for which ${\widehat H}(T_{3,5})= \Z_2\oplus \Z_2$
supported in $\delta$ gradings $1$ and $3$, and 
${\widehat H}_{red}(T_{3,5})=Z_2$
supported in $\delta$ grading $2$, compare 
also with \cite{Baldwin} and \cite{Bloom}.
The computations of Seed and the known computations for ${\widehat {HF}}$
for branched double covers, see \cite{Greene}, raise the natural question.

\begin{question}
For knots $K$ in $S^3$ is ${\widehat H}_{red}(K)$ always isomorphic to
 ${\widehat {HF}}(\Sigma _K)$ as  mod 2 graded vector spaces over $\Z_2$?
\end{question}

Another natural problem is the following
\begin{conj}
Let $U$ denote the unknot. If $K$ is a prime knot so that 
 ${\widehat H}_{red}(K)$ and 
 ${\widehat H}_{red}(U)$ are  isomorphic
as $\delta$ graded vector spaces, then
$$K=U.$$
\end{conj}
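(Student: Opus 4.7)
The plan is to derive $K = U$ by combining the hypothesis with a detection theorem for a related Floer-theoretic invariant. The most direct route proceeds via the Question posed just above the Conjecture. Assuming the conjectural isomorphism ${\widehat H}_{red}(K) \cong \hf(-\Sigma_K)$ with $\Z_2$ coefficients, the hypothesis ${\widehat H}_{red}(K) \cong {\widehat H}_{red}(U) \cong \Z_2$ (concentrated in $\delta$-grading zero) translates immediately into $\hf(-\Sigma_K) \cong \Z_2$. The Ozsv\'ath--Szab\'o detection theorem for Heegaard Floer homology then forces $\Sigma_K \cong S^3$, and by the resolution of the Smith Conjecture the only knot in $S^3$ whose double branched cover is $S^3$ is the unknot, giving $K = U$. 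Note that this last step does not actually require the primeness hypothesis.

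The main obstacle for the above plan is that the isomorphism ${\widehat H}_{red}(K) \cong \hf(-\Sigma_K)$ is itself an open problem. A fallback that avoids it would work directly with the spectral sequence $Kh_{red}(K) \Rightarrow {\widehat H}_{red}(K)$ of Section 6. Since each higher differential shifts the $(q,h)$-grading by $(2k-2,k)$ and hence decreases $\delta$ by $2$, the differential on the $r$-th page maps the $\delta = d$ piece into the $\delta = d-2$ piece, so that $\dim_{\Z_2} E_{r+1}^{\delta=d} \leq \dim_{\Z_2} E_r^{\delta=d}$ for each $r$ and each $d$. Passing to $E_\infty$, the hypothesis yields $\dim_{\Z_2} Kh_{red}(K) \geq 1$. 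If one could sharpen this to an equality --- that is, show that the spectral sequence degenerates at $E_2$ under the hypothesis --- then Kronheimer--Mrowka's unknot detection theorem for reduced Khovanov homology would complete the argument.

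Forcing this degeneration is the hard part. As the $(3,5)$-torus knot computations of Seed recalled at the end of Section 7 illustrate, the spectral sequence can cancel large amounts of rank, so there is no cheap reason for rank-one ${\widehat H}_{red}$ to imply rank-one Khovanov homology. A complete proof therefore appears to require some additional structural input beyond what is developed in the paper: most naturally the identification of ${\widehat H}_{red}$ with $\hf$ of the branched double cover (the Question), or alternatively a Lee-type deformation of ${\widehat H}_{red}$ producing a sharp rank lower bound, or a direct Floer-theoretic unknot detection theorem for ${\widehat H}_{red}$ itself. I expect the Heegaard Floer route to be the most natural, in which case the Conjecture reduces essentially to the Question together with the already established detection theorems.
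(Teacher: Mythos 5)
This statement is a \emph{conjecture} in the paper, not a theorem, so there is no proof of record to compare against; you are right to flag it as open rather than invent an argument. Your discussion of the likely strategy is sensible, but your first route contains a concrete error that the paper's own data rules out. The claimed detection theorem ``$\hf(-\Sigma_K)\cong\Z_2$ implies $\Sigma_K\cong S^3$'' is false over $\Z_2$ without grading information: the Poincar\'e homology sphere $\Sigma(2,3,5)$ has $\hf$ of rank one, and $\Sigma(2,3,5)$ is precisely the double branched cover of $T_{3,5}$. Indeed the paper records ${\widehat H}_{red}(T_{3,5})\cong\Z_2$ as an ungraded vector space, so the ungraded hypothesis alone certainly does not force $K=U$. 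What saves the Conjecture is exactly the $\delta$-graded hypothesis: for $T_{3,5}$ the generator sits in $\delta$-grading $8$, not $0$. But the Question you invoke asks only for an ungraded isomorphism ${\widehat H}_{red}(K)\cong\hf(-\Sigma_K)$, so a positive answer to it would not, by itself, feed into the detection step the way you describe; one would additionally need to match the $\delta$-grading with an absolute grading on $\hf$ (e.g.\ a correction-term or Maslov grading) and then invoke a \emph{graded} detection statement. Your aside that primeness is unnecessary in the final step is therefore also premature, since it rested on the false ungraded detection claim. The remainder of your analysis --- the rank inequality from the $\delta$-graded spectral sequence, the observation that degeneration at $E_2$ plus Kronheimer--Mrowka would suffice, and the warning that Seed's $T_{3,5}$ computation shows substantial rank collapse --- is accurate and is an honest account of why the Conjecture remains open.
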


\end{document}